\newtheorem{theorem}{Theorem}
\newtheorem{proposition}[theorem]{Proposition}
\newtheorem{lemma}[theorem]{Lemma}
\newtheorem{cor}[theorem]{Corollary}
\newtheorem{remark}{Remark}
\newtheorem{definition}{Definition}
\newtheorem{assumption}{Assumption}
\DeclareRobustCommand\widecheck[1]{{\mathpalette\@widecheck{#1}}}
\def\@widecheck#1#2{
    \setbox\z@\hbox{\m@th$#1#2$}%
    \setbox\tw@\hbox{\m@th$#1%
       \widehat{%
          \vrule\@width\z@\@height\ht\z@
          \vrule\@height\z@\@width\wd\z@}$}%
    \dp\tw@-\ht\z@
    \@tempdima\ht\z@ \advance\@tempdima2\ht\tw@ \divide\@tempdima\thr@@
    \setbox\tw@\hbox{%
       \raise\@tempdima\hbox{\scalebox{1}[-1]{\lower\@tempdima\box
\tw@}}}%
    {\ooalign{\box\tw@ \cr \box\z@}}}
\newcommand{\ep}{\varepsilon}
\newcommand{\om}{\omega}
\DeclareMathOperator{\sech}{sech}
\DeclareMathOperator{\sinc}{sinc}
\newcommand{\ds}{\displaystyle}
\newcommand{\be}{\begin{equation}}
\newcommand{\ee}{\end{equation}}
\newcommand{\bes}{\begin{equation*}}
\newcommand{\ees}{\end{equation*}}
\newcommand{\mand}{\quad \text{and}\quad}
\newcommand{\R}{{\bf{R}}}
\newcommand{\Z}{{\bf{Z}}}
\newcommand{\al}{\alpha}
\newcommand{\F}{{\mathfrak{F}}}
\renewcommand{\L}{{\mathcal{L}}}
\newcommand{\A}{{\mathcal{A}}}
\newcommand{\B}{{\mathcal{B}}}
\renewcommand{\P}{{\mathcal{P}}}
\newcommand{\Q}{{\mathcal{Q}}}
\renewcommand{\O}{{\mathcal{O}}}
\newcommand{\N}{{\mathcal{N}}}
\newcommand{\ulambda}{\underline{\lambda}}
\newcommand{\M}{{\mathcal{M}}}
\renewcommand{\tilde}{\widetilde}
\renewcommand{\hat}{\widehat}
\title{Coherent structures in long range FPUT lattices, Part I: Solitary Waves}
\author{Udoh Akpan}\address{Mathematics Department, West Texas A\&M, uakpan@wtamu.edu}
\author{J. Douglas Wright}\address{Department of Mathematics, Drexel University, jdw66@drexel.edu}
\begin{document}
\maketitle

\section{Preamble}
We put $W^{s,p}:=W^{s,p}(\R)$, the usual Sobolev spaces as defined in (say) \cite{E}. 
We also set $H^s:=W^{s,2}$, $L^2:=H^0$, $L^\infty:=W^{0,\infty}$ and $E^s := H^s \cap\{\text{even functions}\}$. We write ``$f(x)$ is $C^r$ on $I$'' if $f^{(r)}(x)$ is uniformly continuous on the set $I \subset \R$. We write ``$f(x)$ is $C^{r,q}$ on $I$'' if $f^{(r)}(x)$ is uniformly $q$-H\"older 
continuous on the set $I \subset \R$.
We use the following conventions for the Fourier transform and its inverse: $\ds\F[f](k) := \hat{f}(k) = {1 \over 2\pi} \int_\R e^{-ikx} f(x) dx$ and $\ds \F^{-1}[\hat{f}](x) := \int_\R e^{ikx} \hat{f}(k) dk$. 

\section{Introduction}
 We consider a general version of the Fermi-Pasta-Ulam-Tsingou (FPUT) lattice with long range interactions.  Specifically:
 \begin{equation}\label{motion}
    \ddot{u}_{j} =  \sum_{m\ge1} \Phi_{m}'(u_{j+m} - u_{j}) - \Phi_{m}'(u_{j} - u_{j-m}).
\end{equation}
Here $j \in \Z$, $t \in \R$, $u_j(t) \in \R$ and $\Phi_m : \R \to \R$. The usual interpretation is that these are the equations of motion for a mechanical system consisting of infinitely many particles, indexed by $j$, arranged on a line and interacting pairwise. The position of the $j^{th}$ particle at time $t$ is $u_j(t)$ and
 $\Phi_m(r)$ gives the 
potential energy associated with the interaction between a particle and the particles $m$ spots down the line, in either direction. In this setting $\eqref{motion}$ is simply Newton's law. There are multiple other interpretations, ranging from power transmission and molecular dynamics to automotive traffic, but here we stick with the usual mechanical point of view.

Our goal in this paper, and its sequel,  is to understand the existence of traveling wave solutions to systems of the form \eqref{motion}.
There are many situations where such solutions of \eqref{motion} have been found:
\begin{itemize}
\item When $\Phi_m (r) =0$ for $m > 1$, \eqref{motion} corresponds to a classical FPUT problem
and there is a rather vast literature on traveling wave solutions: see \cite{T,FW,FP1,V,VT,SK1,SK2,SWsol}.
\item The ``next nearest neighbor'' (NNN) problem, where $\Phi_m(r) =0$ for $m > 2$, has been studied in \cite{FW,VZ}.
\item The case where 
$\Phi_m (r)=0$ for $m \ge N$, for some $N>1$, (that is to say interactions which have long, but finite, range) was studied in \cite{HML}\footnote{\cite{HML}, by Michael Herrmann and Alice Mikikits-Leitner, served as a major inspiration for this work.  Michael Herrmann, a phenomenal mathematician and even more phenomenal colleague, passed away in Summer 2024. This article is dedicated to his memory.}. We will call this situation the ``finite range'' (FR)
problem.
\item Putting $\Phi_m(r) = 1/r^{a}$ corresponds to an analysis of generalized Calogero-Moser lattices. Solitary waves have recently been constructed for $a\in(4/3,3)$ in \cite{IP1, IP2}.
\end{itemize}
Our work in this paper extends the results of \cite{VZ,HML} to interactions with infinite range, {\it i.e.}  to situations where $\Phi_m \ne 0$ for infinitely many $m$. In particular we develop necessary conditions on the potentials $\Phi_m$ which lead to the existence of solitary wave solutions. A notable application will be to generalized Calogero-Moser lattices with $a >3$.
In a subsequent paper we will construct periodic and generalized traveling wave solutions.

The lattice \eqref{motion} is in equilibrium when $u_j(t) = r_* j$, where $r_*$ is a constant\footnote{This constant $r_*$ can be any real number (or at least any real number for which $r_* m$ is in the domain of $\Phi_m'$).  
Many works, though hardly all, make us of the fact that a simple coordinate change and modification of the potentials can put $r_*=0$. We don't do that and instead take $r_*$ as  being some specified, fixed value throughout.}.
We make the following assumption on the regularity of the potentials $\Phi_m$ so that we can expand them in a series about this steady state.
\begin{assumption} \label{phass}
There exists $\delta_*>0$ so that, for all $m \ge 1$, $\Phi_m(r_*m + \eta)$ is $C^{3,1}$
on  $|\eta| \le m \delta_*$.
\end{assumption}
Consequently we have
\be\label{expando}
\Phi'_m(r_*m + \eta) =\varsigma_m+ \al_m \eta +  \beta_m \eta^2 + \Psi'_m(\eta)
\ee
with $\Psi'_m(\eta) = \O(\eta^3)$. Precisely, $\Psi'_m(0) = 0$ and there are constants $\gamma_m \ge 0$ so that
\be\label{Psi est}
|\eta| \le m \delta_* \implies |\Psi'_m(\eta)| \le  \gamma_m |\eta|^3 \mand |\Psi''_m(\eta)| \le 3\gamma_m |\eta|^2. 
\ee
Of course $\varsigma_m:=\Phi'(r_*m)$, 
$\al_m := \Phi''(r_* m)$ and $\beta_m := \Phi'''(r_*m)/2$.   Likewise
$
\gamma_m :=  {\textrm{Lip }} \Phi'''_m/6,
$
where by ${\textrm{Lip }} \Phi'''_m$ we mean the Lipschitz constant of $\Phi'''_m(r_* m + \eta)$ on $|\eta|\le m \delta_*$.
The constants $\varsigma_m$ will play almost no role.

Many aspects of our main results are best phrased in terms of the 
 the dispersion relation, which we compute presently.

\subsection{The dispersion relation, phase speed and classification thereof}
Linearizing \eqref{motion} about the equilibrium $r_* j$ gives
\be \label{linear}
\ddot{u}_j = \sum_{m\ge1}  \al_m(u_{j+m} -2 u_j + u_{j-m}).
\ee
Plugging the plane wave {\it Ansatz} $u_j(t) = e^{i (k j - \omega t)}$ into \eqref{linear} and some routine algebra/trigonometry results in
the dispersion relation
\be\label{dr}
\om^2=\theta(k):=
\sum_{m\ge1} 4 \al_m \sin^2(mk/2).
\ee
We also define
\be\label{lambda}
\lambda(k) := {\theta(k)\over k^2}=\sum_{m\ge1} \al_m m^2 \sinc^2(mk/2),
\ee
along with
\be
c_0^2 := {\lambda(0)} = \sum_{m \ge 1} \al_m m^2.
\ee
$\sqrt{\lambda(k)}$ is the phase speed and  $c_0$ is variously referred to as the ``speed of sound'' or the ``critical speed.''

We make a few observations about $\theta(k)$: (a) it is $2\pi$-periodic in the wavenumber $k$, (b) it is even in $k$, and (c) $\theta(0) = 0$.
Indeed, given that we have not specified the coefficients $\al_m$ at this stage, we see something rather surprising: $\theta(k)$ can be {\it any} function meeting (a), (b) and (c). We codify this:
\begin{lemma}\label{wild}
Let $\theta(k)$ be a piecewise continuously differentiable, even, $2\pi$-periodic function which vanishes at $k=0$.  Then there are coefficients $\al_m$ so that the linear dispersion relation for \eqref{motion} is given by $\om^2 = \theta(k)$.
\end{lemma}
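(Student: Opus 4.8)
The plan is to read \eqref{dr} as a Fourier cosine expansion in disguise. Using $4\sin^2(mk/2)=2(1-\cos(mk))$, the identity we must arrange, namely $\theta(k)=\sum_{m\ge1}4\al_m\sin^2(mk/2)$, is equivalent to
\be
\theta(k)=2\sum_{m\ge1}\al_m-2\sum_{m\ge1}\al_m\cos(mk),
\ee
that is, we must match $\theta$ against a $2\pi$-periodic cosine series whose constant term is minus the sum of the remaining coefficients. Since $\theta$ is even and vanishes at $k=0$, any cosine series representing it automatically has this property, and that observation is really the entire content of the lemma. Concretely I would take $\al_m$ to be the negatives of the Fourier coefficients of $\theta$ and then check that everything converges.

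Here are the steps. First, expand $\theta(k)=c_0+2\sum_{m\ge1}c_m\cos(mk)$, which is legitimate because $\theta$ is even and $2\pi$-periodic. Second --- the one place regularity enters --- since $\theta$ is continuous with a piecewise continuous derivative, $\theta'\in L^2(\T)$; comparing Fourier coefficients gives $\sum_{m\ge1}m^2|c_m|^2<\infty$, whence by Cauchy--Schwarz against $\sum 1/m^2$ we obtain $\sum_{m\ge1}|c_m|<\infty$, so the cosine series converges absolutely and uniformly to $\theta$ on all of $\R$. Third, evaluate at $k=0$: $0=\theta(0)=c_0+2\sum_{m\ge1}c_m$, so $c_0=-2\sum_{m\ge1}c_m$. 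Fourth, substitute and rearrange, which is permissible by absolute convergence:
\be
\theta(k)=-2\sum_{m\ge1}c_m\LC 1-\cos(mk)\RC=\sum_{m\ge1}(-4c_m)\sin^2(mk/2).
\ee
Setting $\al_m:=-c_m=-\tfrac1\pi\int_0^\pi\theta(k)\cos(mk)\,dk$ then yields $\theta(k)=\sum_{m\ge1}4\al_m\sin^2(mk/2)$, which by \eqref{dr} is exactly the linear dispersion relation of \eqref{motion} for this choice of coefficients.

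Frankly there is no genuinely hard step; the proof is short and the ``surprise'' is conceptual rather than technical. The only item worth a careful word is the regularity hypothesis: ``piecewise continuously differentiable'' should be read as ``continuous, with derivative continuous off a finite subset of a period,'' since it is precisely this that puts $\theta'\in L^2(\T)$ and delivers the absolute summability $\sum_{m\ge1}|\al_m|<\infty$ --- which both legitimizes the rearrangement in the last step and makes the series in \eqref{dr} a true identity rather than a merely formal one. If only pointwise equality were desired, the Dirichlet--Jordan test would already suffice, but absolute summability is the natural assumption for the later objects such as $\lambda(k)$ and $c_0^2$ to be meaningful.
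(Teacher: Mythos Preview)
Your argument is correct and follows essentially the same route as the paper: expand $\theta$ in a Fourier cosine series, use $\theta(0)=0$ to eliminate the constant term, and then apply the half-angle identity to read off $\al_m$. The only difference is that you supply the details behind ``$\theta$ equals its Fourier series'' (via $\theta'\in L^2(\T)$ and Cauchy--Schwarz to get absolute summability), which the paper leaves implicit.
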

\begin{proof} The hypotheses on $\theta(k)$ imply that it is equal to its Fourier cosine series. That is to say $\theta(k) = \sum_{m =0}^\infty b_m \cos(mk)$ for some coefficients $b_m$. The condition that $\theta(0) =0$ tells us that $b_0 = -\sum_{m=1}^\infty b_m$ which in turn tells us that $\theta(k) = \sum_{m =1}^\infty b_m(-1+\cos(mk))$. Letting $\al_m = -b_m/2$ along with the half angle formula finishes the proof.
\end{proof}
%

We will not make direct use of Lemma \ref{wild} in what follows, but that there is so much flexibility in $\theta(k)$ means 
that it is often easier to work 
in terms of it and $\lambda(k)$ as opposed to conditions on the coefficients $\al_m$. 
We now distinguish two special types of $\lambda(k)$:
\begin{definition} \phantom{duh} \begin{itemize}
\item
We say $\lambda(k)$ is {\bf Type I} if the following hold.
\begin{enumerate}[(i)]
\item $\lambda(k)$ is bounded below.
\item $\lambda''(0) < 0$.
\item There exists $\mu_* > 0$, $k_* > 0$ and $\sigma \in (0,2]$ so that $|k| \le k_*$ implies
$$
\lambda(k) - \lambda(0) \le - \mu_* k^2 \mand
|\lambda(k) - \lambda(0) - {1 \over 2} \lambda''(0) k^2| \le \mu_* |k|^{2+\sigma}.
$$
\item $\sup_{|k| \ge k_*} \lambda(k) < \lambda(0)$.
\end{enumerate}
\item
We say $\lambda(k)$ is  {\bf Type II}  if the following hold.
\begin{enumerate}
\item $\lambda(k)$ is bounded below.
\item $\lambda''(0) > 0$.
\item $\lambda(k)$ is analytic for $|\Im(k)| < \rho$ where $\rho>0$.
\item There exist $0 \le c_-<c_0$ so that the equation 
$
\lambda(k) = c^2
$
has a unique positive solution $k=k_c$ when $c \in (c_{-},c_0)$
and $\lim_{c\to c_0^-}\lambda'(k_c) \ne 0$.
\end{enumerate}
\end{itemize}
\end{definition}
In Figure \ref{lambdafig} we sketch examples of $\lambda(k)$ for both types.  See the caption there for more details.  
We are not saying that all lattices fall into one of these two types. Indeed Lemma~\ref{wild} precludes such an easy 
categorization.  But a routine classical FPUT lattice is of Type I, as are the FR lattices studied in \cite{HML}.  And  the NNN lattices studied in \cite{VZ} are of Type II.  More on this below.

\begin{figure}
	\centering
	\begin{subfigure}{.45\textwidth}
		\includegraphics[width=\textwidth]{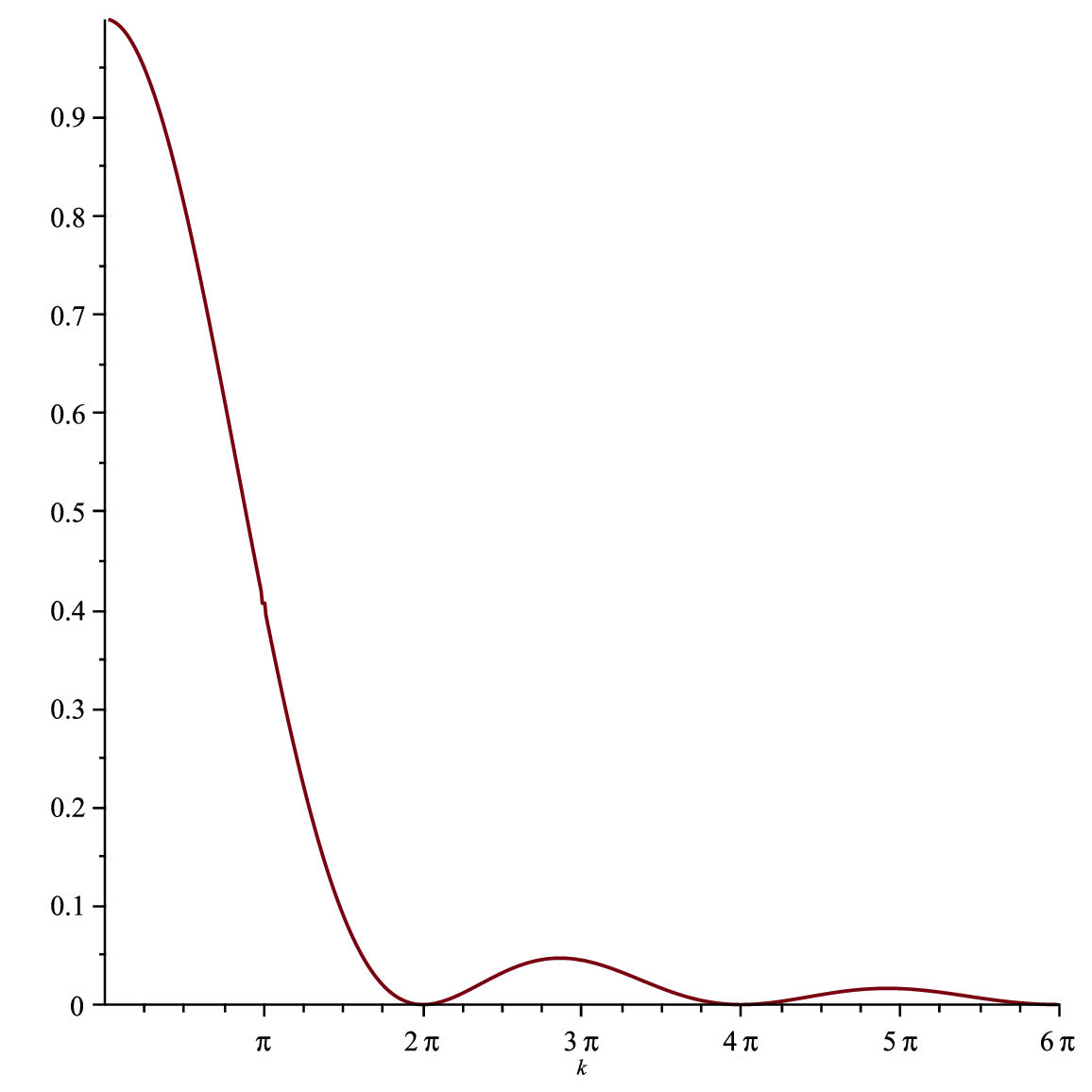}
		\caption{$\lambda(k)$ is Type I.}
	\end{subfigure}
	\begin{subfigure}{.45\textwidth}
		\includegraphics[width=\textwidth]{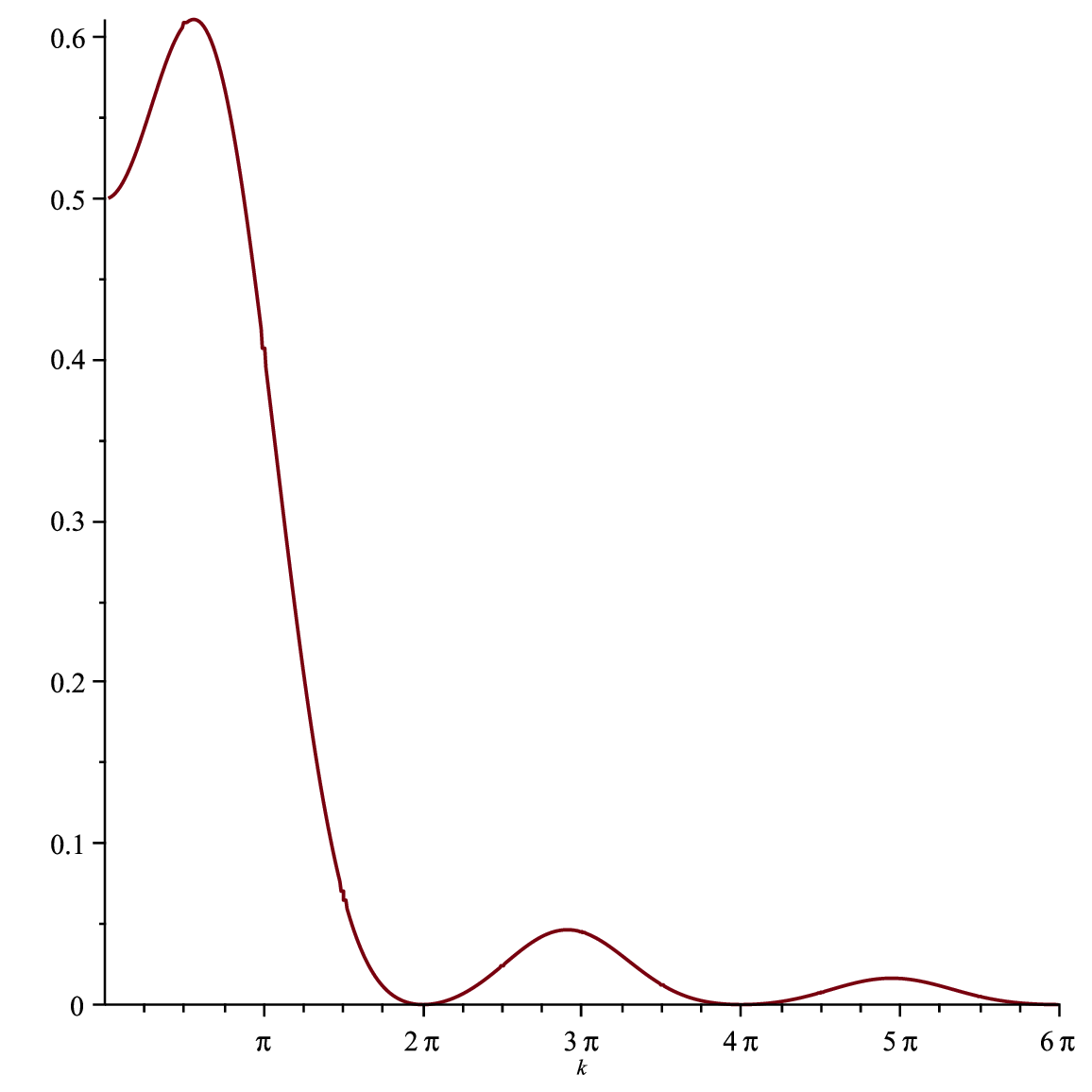}
		\caption{$\lambda(k)$ is Type II.}
	\end{subfigure}
	\caption{$\lambda(k)$ vs $k$.}\label{lambdafig}
\end{figure}

Roughly speaking, the main result of this article is that if $\lambda(k)$ is 
Type I, then \eqref{motion} possesses supersonic (that is $c>c_0$) solitary waves (this is Theorem \ref{type I sw} below).
If $\lambda(k)$ is Type II then \eqref{motion} possesses subsonic (that is $c<c_0$) spatially periodic traveling waves and nanoptera solutions; this will be the focus of
a future paper.  

The conditions in the Type I and Type II definitions could be interpreted as assumptions on $\al_m$;
we also need assumptions on $\beta_m$ and $\gamma_m$.  These are
\begin{assumption} \label{bass}
The sums 
$\ds
\sum_{m \ge 1}|\beta_m| m^{5}$ and $\ds
\sum_{m \ge 1} |\gamma_m| m^{4}$ converge.  Additionally $\ds \sum_{m \ge 1} \beta_m m^3$ is non-zero.
\end{assumption}

\subsection{The traveling wave equation and long wave limit}
We make the traveling wave {\it Ansatz}:
\be\label{TWA}
u(j,t) = r_*j+\ep U_\ep (x) \mand x:=\ep (j-c_\ep t).
\ee
Following \cite{HML}, we have incorporated the ``long wave scaling'' here, made manifest by the presence of the parameter $0< \ep \ll1$. Ultimately
we will use $\ep$ as a bifurcation parameter.  
Substitution of \eqref{TWA}  into \eqref{motion} and a bit of effort demonstrate that 
 $
W_\ep(x):=U_\ep'(x)$ and the wave-speed $c_\ep$ solve the ``nonlinear eigenvalue'' equation
\be\label{TWE1}
\ep^2 c_\ep^2 W_\ep= \sum_{m\ge1} m \A_{\ep m} [\Phi_m'(r_*m+ m \ep^2 \A_{\ep m} W_\ep)-\varsigma_m].
\ee
The maps $\A_{\ep m}$  (as in \cite{HML}) are instances of the averaging operators
$$
\A_h F(x):={1 \over h} \int_{x - h/2}^{x+h/2} F(y) dy.
$$
The derivation of \eqref{TWE1} is carried out in the proof of Lemma 2 in \cite{HML} for the finite range problem and it carries over to this case with essentially no extra work; we omit the details.
Additionally at this stage we assume that $c_\ep$ has the form
\be\label{WSA}
c^2_\ep = c_0^2 - {1 \over 2} \lambda''(0) \ep^2.
\ee
The reason for the unusual prefactor on the $\ep^2$ term will become clear below; see Remark \ref{speed scale}.

Using the expansion \eqref{expando} in \eqref{TWE1} converts it to
\be\label{TWE2}
\B_\ep W_\ep = \Q_\ep(W_\ep,W_\ep) + \ep^2 \P_\ep(W_\ep)
\ee
where
\bes\begin{split}
\B_\ep&:=\ep^{-2}\left(c_0^2-{1 \over 2} \lambda''(0)\ep^2 - \Lambda_\ep\right), \\
\Lambda_\ep &:={\sum_{m\ge1}  \al_m m^2 \A^2_{\ep m} 
},\\
\Q_\ep(V,W)&:=\sum_{m\ge1} \beta_m m^3 \A_{\ep m} [( \A_{\ep m} V)(\A_{\ep m} W)],\\
\P_\ep(W)&:={1 \over \ep^{6}} \sum_{m\ge1} m \A_{\ep m} [\Psi_m'(m \ep^2 \A_{\ep m} W)].
\end{split}\ees
Note that all of the mappings defined above respect parity in that they map even functions to even functions and so on.

In \cite{HML} it is shown that taking the Fourier transform of $\A_h F(x)$ gives
$$
\hat{\A_h F}(k) =\sinc(h k/2) \hat{F}(k)
$$
and so if we take the Fourier transform of $\Lambda_\ep F$ we get
$
\hat{\Lambda_\ep F}(k)=\lambda(\ep k) \hat{F}(k)
$
where $\lambda$ is as in \eqref{lambda}.  This calculation demonstrates the central role that $\lambda$ plays in this business.
If we (formally)  expand $\lambda(\ep k)$ about $k=0$, use $c_0^2 = \lambda(0)$ and recall that $\lambda$ is  even we see that
$$
\hat{\B_\ep W_\ep} (k)= \ep^{-2} (c_0^2-{1\over2} \lambda''(0) \ep^2 - \lambda(\ep k)) \hat{W}_\ep = 
\left(-{1 \over 2} \lambda''(0) (1+k^2) + \O(\ep^2k^4)\right) \hat{W}_\ep(k).
$$

The usual identification of the operator $\partial_x^2$ with the Fourier multiplier $-k^2$ motivates setting
\be\label{B0}
\B_0 := -{1 \over 2}\lambda''(0)(1-\partial_x^2).
\ee
We will provide precise estimates on $\B_\ep - \B_0$ below. 
Moreover, in \cite{HML} it is shown that $\A_h$ converges strongly to the identity on $L^2$, which leads us to putting
\be\label{Q0}
\Q_0(V,W) := b VW \text{ where } b:=\left( \sum_{m\ge1} \beta_m m^3\right).
\ee
Again, precise estimates on $\Q_\ep - \Q_0$ are forthcoming. Similarly, the estimates for $\Psi'$ in \eqref{expando} indicate that $\P_\ep$ is formally  $\O(1)$ with respect to $\ep$.

Thus, if we put $\ep =0$ in \eqref{TWE2} in accordance with \eqref{B0} and \eqref{Q0}, we arrive at
\be\label{KdV}
-{1 \over 2} \lambda''(0)\left(W_0 - W_0''\right)  = bW_0^2.
\ee
This equation is well-known to have a unique even nontrivial homoclinic solution 
given by
\be\label{W0}
W_0(x) := -{3\lambda''(0) \over 4 b}\sech^2 \left( {x \over2}\right).
\ee
Obviously this is nonsense if $b = 0$ and that is why we assume it is not in Assumption \ref{bass}.
Note that $W_0$ is smooth and exponentially decaying and so, for all $s \in \R$,   $\|W_0\|_{H^{s}} < \infty$.

\begin{remark}\label{speed scale}
If, at \eqref{WSA}, we put $c_\ep^2 = c_0^2 + \mu \ep^2$ and repeat the above computation, at \eqref{KdV} we would
have the equation $\mu W_0 + {1 \over 2} \lambda''(0) W_0'' = b W_0^2$.  This equation will have a $\sech^2$-type
homoclinic only if $\mu \lambda''(0)<0$.  This is the main reason for the choice of the prefactor
at~\eqref{WSA}.
\end{remark}

The rest of the paper is dedicated to determining what happens to $W_0$ when $\ep$ is taken to be small and non-zero in \eqref{TWE2}.  
The keen-eyed reader will note that terms in $\B_\ep-\B_0$ contain high order derivatives which in turn raise the grim specter of singular perturbation.

\section{Core estimates}

\subsection{Estimates of $\B_\ep$} 
The linear operator $\B_\ep$ is the central operator in this work and in particular we need to understand its behavior as $\ep \to0^+$.  

Our main estimates are contained in the following:
\begin{proposition} \label{B ests}
Suppose that \eqref{motion} is Type I.
For all $s\in\R$ and $\ep>0$, $\B_\ep$ is a bounded and invertible map from $H^{s}$ to itself. Moreover, 
 there exists $C_\B>0$ and $\ep_\B>0$ so that the following  estimates hold for all $\ep\in (0,\ep_\B)$ and $s\in \R$:
\be\label{B est 1}
\|\B_\ep F \|_{H^s} \le C_\B\ep^{-2} \|F\|_{H^s},
\ee
\be\label{B inv est}
\|\B_\ep^{-1} F\|_{H^s} \le C_\B\|F\|_{H^s},
\ee
\be\label{B est 2}
\|(\B_\ep - \B_0) F\|_{H^s} \le C_\B \ep^\sigma \|F\|_{H^{s+\sigma}}
\ee
and
\be\label{B est 3}
\|(\B^{-1}_\ep - \B^{-1}_0) F\|_{H^s}  \le C_\B \ep^\sigma \|F\|_{H^s}.
\ee
\end{proposition}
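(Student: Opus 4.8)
The plan is to exploit the fact that $\B_\ep$ and $\B_0$ are both Fourier multiplier operators. As recorded in the discussion preceding the proposition, $\B_\ep$ is the multiplier with symbol $m_\ep(k) := \ep^{-2}\big(\lambda(0) - \lambda(\ep k)\big) - \tfrac12\lambda''(0)$ and $\B_0$ is the multiplier with symbol $m_0(k) := -\tfrac12\lambda''(0)(1+k^2)$. Since a multiplier with symbol $p$ satisfies $\|p\,F\|_{H^s} = \|\langle k\rangle^s\, p(k)\,\hat F(k)\|_{L^2}$ for every real $s$ (with $\langle k\rangle := (1+k^2)^{1/2}$), each of the four assertions reduces to a pointwise bound on a symbol that is uniform in $k$ and in $\ep$: \eqref{B est 1} to $|m_\ep(k)| \le C\ep^{-2}$, \eqref{B inv est} (together with invertibility) to $1/|m_\ep(k)| \le C$, \eqref{B est 2} to a suitably $\langle k\rangle$-weighted bound on $|m_\ep(k) - m_0(k)|$, and \eqref{B est 3} to a uniform bound on $\big|\tfrac{1}{m_\ep(k)} - \tfrac{1}{m_0(k)}\big| = \big|\tfrac{m_\ep(k) - m_0(k)}{m_\ep(k)\,m_0(k)}\big|$.

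Everything is then driven by a dichotomy on the size of $\ep k$, fed by the Type I hypotheses applied to $g(y) := \lambda(0) - \lambda(y)$. First, those hypotheses force $g \ge 0$ everywhere and $g$ bounded: near $0$ the first inequality in (iii) gives $g(y) \ge \mu_* y^2 \ge 0$, for $|y| \ge k_*$ condition (iv) gives $g(y) \ge \nu := \lambda(0) - \sup_{|y|\ge k_*}\lambda(y) > 0$, and (i) gives $g(y) \le M < \infty$. In the \emph{low-frequency} regime $|\ep k| \le k_*$, (iii) yields both $\mu_*(\ep k)^2 \le g(\ep k) \le C_+(\ep k)^2$ and the Taylor-remainder bound $\big| g(\ep k) + \tfrac12\lambda''(0)(\ep k)^2\big| \le \mu_*|\ep k|^{2+\sigma}$; dividing by $\ep^2$, the former says $m_\ep(k)$ is comparable to $\langle k\rangle^2$, and the latter says precisely $m_\ep(k) - m_0(k) = \ep^{-2} R(\ep k)$ with $|R(\ep k)| \le \mu_* |\ep k|^{2+\sigma}$. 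In the \emph{high-frequency} regime $|\ep k| \ge k_*$ one instead has $\nu \le g(\ep k) \le M$, so $m_\ep(k)$ is comparable to $\ep^{-2}$; on this range one repeatedly trades powers using $\ep^{-2} \le k_*^{-2} k^2 \le k_*^{-2}\langle k\rangle^2$ and, conversely, $\ep^{\sigma}\langle k\rangle^{\sigma} \ge k_*^{\sigma}$.

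Granting these, \eqref{B inv est} and the invertibility of $\B_\ep$ on each $H^s$ are immediate: $m_\ep(k) \ge -\tfrac12\lambda''(0) > 0$ uniformly in $k$ and $\ep$ (using $g\ge 0$ and (ii)), so $\B_\ep^{-1}$ is a bounded multiplier with symbol $1/m_\ep$; and since $g$ is globally bounded, $m_\ep$ is bounded for each fixed $\ep$, so $\B_\ep$ itself is bounded on every $H^s$ — this is what makes the statement hold for all $\ep>0$ rather than only small $\ep$. For \eqref{B est 1} one uses $g(\ep k) \le C_+(\ep k)^2 \le C_+ k_*^2$ on the low range and $g(\ep k) \le M$ on the high range to get $\sup_k m_\ep(k) \le C\ep^{-2}$. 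For \eqref{B est 2} one estimates $m_\ep - m_0$ regime by regime — it equals $\ep^{-2} R(\ep k)$ at low frequency and is crudely $\le \ep^{-2}g(\ep k) + \tfrac12|\lambda''(0)|k^2 \lesssim k^2$ at high frequency — and then expresses the outcome in the $\langle k\rangle$-weighted form demanded by \eqref{B est 2} via the trades above. Finally \eqref{B est 3} follows by dividing the bound on $|m_\ep - m_0|$ by $|m_\ep\, m_0|$: since $m_0 \asymp \langle k\rangle^2$ and $m_\ep$ is comparably large ($\asymp \langle k\rangle^2$ at low frequency, $\asymp \ep^{-2}$ at high frequency), the quotient gains enough negative powers of $\langle k\rangle$ to absorb the growth in the numerator and leaves a bound of order $\ep^{\sigma}$, uniform in $k$, with no derivative loss.

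The step that needs real care, as opposed to bookkeeping, is the matching across the interface $|\ep k| \approx k_*$: there $\ep^{-1}$ and $\langle k\rangle$ are comparable, and one must check that every constant manufactured along the way — the $k_*^{\sigma}$ in the low-frequency Taylor step, the $k_*^{-2}$ in the high-frequency step, and the uniform positive lower bound on $m_\ep$ — is genuinely independent of $\ep$, and that the two regime-wise bounds agree where the ranges overlap. A secondary point worth isolating is the claim, used above, that the Type I hypotheses really do imply $0 \le g \le M$ globally (equivalently $\lambda(y) \le \lambda(0)$ with equality only at $y = 0$), since this is what gives boundedness of $\B_\ep$ for each fixed $\ep$; the Fourier-multiplier representation of $\B_\ep$ is imported verbatim from the discussion preceding the proposition and requires nothing further.
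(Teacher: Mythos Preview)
Your proposal is correct and follows essentially the same route as the paper: both reduce everything to pointwise bounds on the Fourier symbols $m_\ep$ and $m_0$, split into the regimes $|\ep k|\le k_*$ and $|\ep k|\ge k_*$, and feed in the Type~I hypotheses exactly as you describe. The only cosmetic differences are that the paper packages the Taylor remainder as a single function $T_2(k):=\lambda(k)-\lambda(0)-\tfrac12\lambda''(0)k^2$ and first establishes the global bound $|T_2(k)|\le\mu|k|^{2+\sigma}$ before plugging into \eqref{B est 2} and \eqref{B est 3}, and in the high-frequency part of \eqref{B est 3} it uses the cruder triangle inequality $|1/m_\ep - 1/m_0|\le |1/m_\ep|+|1/m_0|$ rather than your quotient formula.
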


\begin{remark}
The most important estimate in the above is \eqref{B est 3}, as it allows us to avoid problems related
the loss of derivatives associated with approximating $\B_\ep$ by $\B_0$, {\it i.e.}~to dispell issues of singular perturbation. 
We take this idea directly from the landmark FPUT paper \cite{FP1}.
\end{remark}

\begin{proof}


%

%
%
%
%

Recall that 
$\ds
\hat{\B_\ep F} (k) = \ep^{-2}\left(c_0^2-{1 \over 2}\lambda''(0)\ep^2 - \lambda(\ep k)\right) \hat{F}(k).
$
Let $\ulambda:=\inf_{k \in \R} \lambda(k)>-\infty$. 
Type I conditions  tell us $\ulambda \le \lambda(k) \le  \lambda(0)=c_0^2$ and $\lambda''(0)<0$. Thus the multiplier for $\B_\ep$ satisfies
 $$
 {1 \over 2} |\lambda''(0)| \le \ep^{-2}\left(c_0^2-{1 \over 2}\lambda''(0)\ep^2 - \lambda(\ep k)\right) \le (c_0^2-\ulambda) \ep^{-2} + {1 \over 2} |\lambda''(0)|. 
$$
And so the usual tools for estimating Fourier multiplier operators\footnote{By which we mean the following: if $\hat{LF}(k) = \tilde{L}(k) \hat{F}(k)$
and $C_L:=\sup_{k \in \R} |\tilde{L}(k)| (1+|k|)^{-q} < \infty$ then $L$ is a bounded map from $H^{s+q}$ to $H^{s}$ and $\|L F\|_{H^s} \le C_L \|F\|_{H^{s+q}}$. 
} 
allow us to conclude that $\B_\ep$ is a bounded 
 map with bounded inverse from $H^s$ to $H^s$, any $s\in \R$ and the estimates \eqref{B est 1} and \eqref{B inv est} follow.

Let
$
T_2(k) := \lambda(k) - \lambda(0) - {1 \over 2} \lambda''(0) k^2.
$
Type I condition (iii) tells us that $|T_2(k)| \le \mu_* |k|^{2+\sigma}$ when $|k| \le k_*$.
On the other hand, for all $k$ we have
$$
|T_2(k)| \le |\lambda(k) - \lambda(0)| + {1 \over 2} |\lambda''(0)| k^2 \le \lambda(0)-\ulambda + {1 \over 2} |\lambda''(0)| k^2.
$$
From this we can conclude that there is $\tilde{\mu} > 0$ so that
$|T_2(k)| \le \tilde{\mu} k^2$ for $|k| \ge k_*$. We can make this estimate worse by replacing $k^2$ 
with $|k|^{2+\sigma}$ by tinkering with the coefficient. Indeed, if we do that we find that there exists
$\mu > 0$ so that
\be\label{T2 est}
|T_2(k)| \le \mu |k|^{2+\sigma}
\ee
holds for all $k \in \R$.

Now we have
$$
\F[(\B_\ep - \B_0) F](k) = \left( {\ep^{-2}(\lambda(0) - {1 \over 2} \lambda''(0) \ep^2 - \lambda(\ep k)) }+
{{1 \over 2}  \lambda''(0) (1+k^2)}\right) \hat{F}(k).
$$
A direction computation followed by the estimate \eqref{T2 est} for $T_2$ gives:
\bes
\left\vert
{\ep^{-2}(\lambda(0) - {1 \over 2} \lambda''(0) \ep^2 - \lambda(\ep k)) }+
{{1 \over 2}  \lambda''(0) (1+k^2)}
\right\vert =  \ep^{-2} |T_2(\ep k)| \le \mu \ep^{\sigma}|k|^{2 + \sigma}.
\ees
This implies \eqref{B est 2}.

The key
to establishing \eqref{B est 3} is to show that the symbol for $\B_{\ep}^{-1}$ converges
uniformly on $\R$ to that of $\B_0^{-1}$.  This strategy first appeared for classical FPUT in \cite{FP1} and similar estimates have appeared in other contexts since \cite{HML,IP2,SW}.
The approach we take is closest to that of \cite{SW}, though  there are some technical differences.

We have:
$$
\F[(\B^{-1}_\ep - \B^{-1}_0) F](k) = \left({\ep ^2 \over \lambda(0) - {1 \over 2} \lambda''(0) \ep^2 - 
\lambda(\ep k)}
+{2 \over   \lambda''(0)  \left( 1+
k^2\right)}\right) \hat{F}(k).
$$
Some algebra leads us to
\bes\begin{split}
\left|{\ep ^2 \over \lambda(0) - {1 \over 2} \lambda''(0) \ep^2 - 
\lambda(\ep k)}
+{2 \over   \lambda''(0)  \left( 1+
k^2\right)}\right|
=& { 2 \over |\lambda''(0)|} { |T_2(\ep k)| \over 
| -{1 \over 2} \lambda''(0) \ep^2 -
T_1(\ep k)| \left( 1+
k^2\right)}
\end{split}\ees
where $T_1(k) := \lambda(k) - \lambda(0)$.
Type I condition (iii)  implies that $-{1 \over 2} \lambda''(0) \ep^2 -
T_1(\ep k) \ge -{1 \over 2} \lambda''(0) \ep^2 + \mu_* \ep^2 k^2 > 0$ when $|k| \le k_*$.
This and \eqref{T2 est} give
\be\begin{split}
&\sup_{|k|\le k_*/\ep}
\left|{\ep ^2 \over \lambda(0) - {1 \over 2} \lambda''(0) \ep^2 - 
\lambda(\ep k)}
+{2 \over   \lambda''(0)  \left( 1+
k^2\right)}\right| \\
\le &C\sup_{|k|\le k_*/\ep}
 { |\ep k|^{\sigma+2} \over 
(  -{1 \over 2} \lambda''(0)\ep^2  + \mu_*\ep^2  k^2)(1+k^2)} \\
\le & C \ep^\sigma \sup_{|k|\le k_*/\ep}  {|k|^{\sigma+2} \over 
(  -{1 \over 2} \lambda''(0)  + \mu_*  k^2)(1+k^2)}\\
\le& C \ep^\sigma.
\end{split}\ee

On the other hand, we have
\be\label{win}\begin{split}&\sup_{|k|\ge k_*/\ep}
\left|{\ep ^2 \over \lambda(0) - {1 \over 2} \lambda''(0) \ep^2 - 
\lambda(\ep k)}
+{2 \over   \lambda''(0)  \left( 1+
k^2\right)}\right| \\
\le &
\sup_{|k|\ge k_*/\ep}
\left|{\ep ^2 \over \lambda(0) - {1 \over 2} \lambda''(0) \ep^2 - 
\lambda(\ep k)}\right| 
+\sup_{|k|\ge k_*/\ep}\left|{2 \over   \lambda''(0)  \left( 1+
k^2\right)}\right| 
\end{split}\ee
Conditions (ii) and (iv) imply that, for any $\ep > 0$,
$$
\inf_{|k| \ge k_*/\ep}  \left(\lambda(0) - {1 \over 2} \lambda''(0) \ep^2-\lambda(\ep k)\right)
\ge \lambda(0) - \sup_{|k| \ge k_*} \lambda(k)> 0.
$$
That is to say, the denominator in the first term on the right is bounded from zero uniformly.
Thus the first term is
controlled by $C \ep^2$.  And so is the second, by yet more elementary considerations.  

We conclude 
\be\begin{split}
&\sup_{|k| \in \R}
\left|{\ep ^2 \over \lambda(0) - {1 \over 2} \lambda''(0) \ep^2 - 
\lambda(\ep k)}
+{2 \over   \lambda''(0)  \left( 1+
k^2\right)}\right|  \le C \ep^\sigma.
\end{split}\ee
This implies \eqref{B est 3}, and we are done.
%

\end{proof}

\subsection{Estimates of $\A_h$}
The averaging operators $\A_h$ play an important role and we record some key estimates:
\begin{theorem}
\label{hml}
There exists $C_\A>0$ so that the following hold for all $h > 0$, $s\in \R$:
 \be \label{A est 1}
 \ds \|\A_h F\|_{H^s} \le \|F\|_{H^s},
 \ee
\item \be \label{A est smooth}  \|\A_h F\|_{H^{s+1}} \le C_\A (1+h^{-1}) \|F\|_{H^s},\ee
\be \label{A est 2}
\ds \|\A_hF-F\|_{H^s} \le C_\A h^2 \|F''\|_{H^s},
\ee
\be\label{A est 3}
 \left\|\A_hF-F-{h^2 \over 24} F''\right\|_{H^s}  \le C_\A h^4 \|F''''\|_{H^s}
 \ee
 and
\be \label{A est 4}
\ds \left\| (\A_h -\A_{h'}) F\right\|_{H^s} \le C_\A |h-h'| \|F'\|_{H^s}.
\ee

\end{theorem}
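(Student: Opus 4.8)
The plan is to pass to the Fourier side, where every operator here is diagonal. As recalled in the text, $\widehat{\A_h F}(k) = \sinc(hk/2)\,\hat F(k)$, so $\A_h$ is the Fourier multiplier operator with symbol $m_h(k):=\sinc(hk/2)$, and each of \eqref{A est 1}--\eqref{A est 4} reduces to a pointwise bound of the shape ``symbol of the operator $\le C\,h^{a}(1+|k|)^{b}$'', after which the multiplier criterion recalled in the footnote in the proof of Proposition~\ref{B ests} produces the stated $H^s$-estimate (with $C_\A$ independent of $h$ and $s$). Two of the five are immediate: $|m_h(k)|\le 1$ gives \eqref{A est 1}; and using $|\sinc z|\le\min(1,|z|^{-1})$ one has $(1+|k|)|m_h(k)|\le |\sinc(hk/2)|+|k|\,|\sinc(hk/2)|\le 1+2/h$, which is exactly the symbol bound needed (with $b=-1$) for the $H^s\to H^{s+1}$ estimate \eqref{A est smooth}.

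The remaining three are ``Taylor-remainder'' estimates for $\sinc$, and the one preliminary fact I would record is that there is an absolute constant $c$ with
\be\label{sincrem}
|\sinc z - 1|\le c\,z^2,\qquad \LV \sinc z - 1 + \tfrac{z^2}{6}\RV\le c\,z^4,\qquad |\sinc' z|\le c \qquad (z\in\R).
\ee
Each of these holds globally: near $z=0$ the power series $\sinc z = 1 - z^2/6 + z^4/120 - \cdots$ controls the left side by the indicated power, while for $|z|$ bounded away from $0$ one uses the crude bounds $|\sinc z|\le 1$, $|\sinc z - 1 + z^2/6|\le 2+z^2/6$, and the explicit formula $\sinc' z = (z\cos z - \sin z)/z^2$ (continuous on $\R$, vanishing at $0$ and as $|z|\to\infty$, hence bounded). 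Equivalently, the quotients $(\sinc z-1)/z^2$ and $(\sinc z - 1 + z^2/6)/z^4$ extend continuously across $z=0$ and tend to $0$ at infinity, so they are bounded.

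With \eqref{sincrem} in hand the rest is bookkeeping, writing $z=hk/2$ throughout. For \eqref{A est 2}: $|m_h(k)-1| = |\sinc z-1|\le \tfrac{c}{4}h^2k^2$, so the symbol $m_h(k)-1$ of $\A_h - I$ is $-k^2$ times a quantity bounded by $\tfrac{c}{4}h^2$, and since $-k^2\hat F(k)=\widehat{F''}(k)$ the criterion gives $\|\A_h F - F\|_{H^s}\le \tfrac{c}{4}h^2\|F''\|_{H^s}$. For \eqref{A est 3}: note $m_h(k)-1+\tfrac{h^2k^2}{24} = \sinc z - 1 + \tfrac{z^2}{6}$, which by \eqref{sincrem} is bounded by $\tfrac{c}{16}h^4 k^4$, i.e. $k^4$ times a quantity bounded by $\tfrac{c}{16}h^4$; since $\widehat{\A_h F - F - \tfrac{h^2}{24}F''}(k) = (m_h(k)-1+\tfrac{h^2k^2}{24})\hat F(k)$ and $k^4\hat F(k)=\widehat{F''''}(k)$, the estimate follows. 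For \eqref{A est 4}: applying the mean value theorem to $t\mapsto \sinc(tk/2)$, whose derivative is $\tfrac{k}{2}\sinc'(tk/2)$, together with $|\sinc'|\le c$, gives $|m_h(k)-m_{h'}(k)|\le \tfrac{c}{2}|h-h'|\,|k|$, i.e. $|k|$ times a quantity bounded by $\tfrac{c}{2}|h-h'|$; since $|\widehat{F'}(k)| = |k|\,|\hat F(k)|$ this yields \eqref{A est 4}. Taking $C_\A$ to be the largest of the constants produced finishes the proof. The only points that need genuine care rather than routine calculus are making the remainder bounds in \eqref{sincrem} honestly global (not merely asymptotic near $z=0$) and keeping the $h$-dependence explicit, in particular the $1+h^{-1}$ blow-up in \eqref{A est smooth}, which reflects that averaging over a window of width $h$ gains only one derivative and does so with a constant that degrades as $h\to 0$.
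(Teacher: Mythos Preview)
Your proof is correct and follows essentially the same Fourier-multiplier approach as the paper, which largely defers the details to \cite{HML} and only spells out \eqref{A est 2}. The one thing worth noting is that the paper's treatment of \eqref{A est 2} packages the bounded quotient $(\sinc(hk/2)-1)/(hk)^2$ into an operator $\Theta_h$ satisfying $(\A_h-1)F=-h^2\Theta_h F''$; this is used later in the proof of \eqref{Q est 4}, so you would want to record that identity explicitly rather than leaving it implicit in your remainder bound.
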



\begin{proof}
For  $H^0=L^2$ the details can be found in \cite{HML}, but the case for general $H^s$ is more or less no different and so we largely omit
the details.  We do provide a slightly different take on the proof of \eqref{A est 2} here, as it will have a byproduct which will be useful below.

We have $\ds
\F [ \A_h F - F](k) = (\sinc(hk/2)-1) \hat{F}(k).$ 
The function $\sinc(K) - 1$ has a zero of order two at $K=0$ and this, with some Calculus trickery, 
implies that there is a constant $C_\Theta>0$ so that $\sup_{K\in \R} (\sinc(K/2)-1)/K^2 < C_\Theta$. So if we define the Fourier multiplier operator $\Theta_h F$ via 
\be\label{Theta}
\ds \hat{\Theta_h F} (k) =  {\sinc(h k/2)-1 \over (h k)^2} \hat{F}(k),
\ee
we have, for all $h > 0$, 
\be \label{Theta est}
\|\Theta_h F\|_{H^s} \le C_\Theta \|F\|_{H^{s}}.
\ee
Some algebra on the Fourier side shows us that $(\A_h-1) F = -h^2 \Theta_h F''$.  This observation and \eqref{Theta est} give \eqref{A est 2}.

\end{proof}

\subsection{Estimates of $\Q_\ep$}
For the bilinear term $\Q_\ep$ we have:
\begin{proposition} \label{Qest}
Suppose Assumption \ref{bass} holds. Then, for $s\ge1$ 
there exists $C_\Q>0$ and $\ep_\Q > 0$ so that the following estimates hold for $\ep,\ep' \in (0,\ep_\Q)$:
\be\label{Q est 1}
\|\Q_\ep(V,W)\|_{H^s} \le C_\Q \|V\|_{H^s} \|W\|_{H^s},
\ee
\be\label{Q est smooth}
\|\Q_\ep(V,W)\|_{H^{s+1}} \le C_\Q \ep^{-1} \|V\|_{H^s} \|W\|_{H^s},
\ee
\be\label{Q est 2}
\|\Q_\ep(V,W) - \Q_{\ep'}(V,W)\|_{H^s} \le C_\Q|\ep-\ep'|\|V\|_{H^{s+1}} \|W\|_{H^{s+1}},
\ee
\be\label{Q est 3}
\|\Q_\ep(V,W) - \Q_0(V,W)\|_{H^s} \le C_\Q \ep^2\|V\|_{H^{s+2}} \|W\|_{H^{s+2}}
\ee
and
\be\label{Q est 4}
\| (1-\partial_x^2)^{-1} \left(Q_\ep(V,W)-Q_0(V,W)\right) \|_{H^1} \le C_\Q \ep^2 \| V\|_{H^1} \|W\|_{H^1}.
\ee

\end{proposition}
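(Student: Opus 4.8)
The plan is to exploit the Fourier-side representation of $\A_h$ together with the absolute convergence built into Assumption \ref{bass}, reducing each estimate to a sum over $m$ of a term that is controlled using Theorem \ref{hml} and the algebra property of $H^s$ for $s \ge 1$. Throughout I would write $\Q_\ep(V,W) = \sum_{m\ge1} \beta_m m^3\, \A_{\ep m}\big[(\A_{\ep m}V)(\A_{\ep m}W)\big]$ and peel off estimates term by term. For \eqref{Q est 1}: by \eqref{A est 1} each $\A_{\ep m}$ is a contraction on $H^s$, and since $s\ge1$, $H^s$ is a Banach algebra, so $\|\A_{\ep m}[(\A_{\ep m}V)(\A_{\ep m}W)]\|_{H^s} \le C_s \|V\|_{H^s}\|W\|_{H^s}$ uniformly in $m$ and $\ep$; summing against $\sum |\beta_m| m^3 \le \sum|\beta_m|m^5 < \infty$ gives the result with $C_\Q$ proportional to that sum. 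For \eqref{Q est smooth}, I would instead apply \eqref{A est smooth} to the outermost averaging operator, picking up a factor $C_\A(1+(\ep m)^{-1}) \le C_\A \ep^{-1}(1 + \ep m^{-1})\cdot m \le C \ep^{-1} m$ — more carefully, $(1+(\ep m)^{-1}) \le 2(\ep m)^{-1}$ for $\ep m \le 1$ and is bounded for $\ep m \ge 1$, so one gets a bound $C\ep^{-1}(1+ m^{-1}) \le C\ep^{-1}$ times $m$-independent data... wait, the factor $(\ep m)^{-1}$ grows like $1/m$, which is summable against $m^3$, so the $m$-sum $\sum |\beta_m| m^3 (1 + (\ep m)^{-1}) \le \ep^{-1}\sum|\beta_m|m^3(1+ m^{-1}) $ converges; this yields \eqref{Q est smooth}.

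For the difference estimates \eqref{Q est 2} and \eqref{Q est 3}, the idea is a telescoping/Leibniz decomposition: write the difference of triple-averaging expressions as a sum of three terms in each of which a single $\A_{\ep m}$ is replaced by $\A_{\ep' m}$ (resp. by the identity, for \eqref{Q est 3}), the other two factors being bounded by \eqref{A est 1} in the appropriate norm. For \eqref{Q est 2} each replaced factor contributes, via \eqref{A est 4}, a factor $C_\A |{\ep m} - {\ep' m}| \|\cdot'\|_{H^s} = C_\A m|\ep - \ep'|\|\cdot\|_{H^{s+1}}$, so the $m$-sum is $|\ep-\ep'|\sum|\beta_m| m^3 \cdot m = |\ep-\ep'|\sum |\beta_m| m^4 < \infty$ (again dominated by the $m^5$ sum). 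For \eqref{Q est 3} each replaced factor contributes, via \eqref{A est 2}, a factor $C_\A (\ep m)^2 \|\cdot''\|_{H^s} = C_\A \ep^2 m^2 \|\cdot\|_{H^{s+2}}$, and the $m$-sum becomes $\ep^2 \sum |\beta_m| m^3 \cdot m^2 = \ep^2 \sum|\beta_m| m^5 < \infty$, which is exactly why Assumption \ref{bass} demands the fifth moment. One must also account for the $\Q_0$ term $bVW = (\sum \beta_m m^3)VW$: it is naturally absorbed into the telescoping since $\Q_0$ is $\Q_\ep$ with every $\A_{\ep m}$ set to the identity, and $b$ is the $m$-sum of the coefficients.

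The estimate \eqref{Q est 4} is the one requiring the extra idea, and I expect it to be the main obstacle, since it claims a gain of two derivatives (from $H^1$ data to $H^1$ output after losing nothing) whereas the crude bound \eqref{Q est 3} costs two derivatives. The point is that $(1-\partial_x^2)^{-1}$ is a smoothing Fourier multiplier of order $-2$, i.e.\ it maps $H^{s}\to H^{s+2}$ with the multiplier $(1+k^2)^{-1}$, so morally it should cancel the two-derivative loss in \eqref{Q est 3}. To make this rigorous I would revisit the telescoping decomposition for $\Q_\ep - \Q_0$ and, in the term where the \emph{outer} $\A_{\ep m}$ is replaced, use the byproduct identity from the proof of \eqref{A est 2}, namely $(\A_h - 1)G = -h^2 \Theta_h G''$ with $\Theta_h$ bounded on every $H^s$ uniformly in $h$ \eqref{Theta est}; then $(1-\partial_x^2)^{-1}(\A_{\ep m}-1)G = -(\ep m)^2 (1-\partial_x^2)^{-1}\Theta_{\ep m} G''$, and since $(1-\partial_x^2)^{-1}\partial_x^2$ is bounded on $H^1$, the two derivatives on $G$ are absorbed by the resolvent rather than charged to $G$, leaving $\|(1-\partial_x^2)^{-1}(\A_{\ep m}-1)G\|_{H^1} \le C(\ep m)^2\|G\|_{H^1}$. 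For the two inner-replacement terms one instead pulls $(1-\partial_x^2)^{-1}$ inside past the outer contraction $\A_{\ep m}$ (it commutes, being a Fourier multiplier) and commutes it past the product using that $(1-\partial_x^2)^{-1}$ applied to a product of two $H^1$ functions, one of which has had $(\A_{\ep m}-1)$ applied, again yields the $(\ep m)^2$ gain with only $H^1$ norms on the right — here one uses the algebra property together with the order $-2$ smoothing, estimating $\|(1-\partial_x^2)^{-1}(F \cdot (\A_{\ep m}-1)G)\|_{H^1} \le \|F\cdot(\A_{\ep m}-1)G\|_{H^{-1}} \le C\|F\|_{H^1}\|(\A_{\ep m}-1)G\|_{H^{-1}} \le C\|F\|_{H^1}(\ep m)^2\|G\|_{H^1}$, the middle step being the duality/multiplier bound $H^1 \cdot H^{-1} \hookrightarrow H^{-1}$ and the last step \eqref{A est 2} used at Sobolev index $-1$. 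Summing the three contributions against $\sum|\beta_m|m^3 \cdot m^2 \ep^2 = \ep^2\sum|\beta_m|m^5 < \infty$ finishes \eqref{Q est 4}. The delicate point to get right is the bilinear estimate $\|FG\|_{H^{-1}} \le C\|F\|_{H^1}\|G\|_{H^{-1}}$ (or its variants), which holds because $H^1(\R) \hookrightarrow L^\infty$ and multiplication by an $H^1$ function is bounded on $H^{-1}$ by duality; I would state this as a short lemma or cite it, then the rest is bookkeeping of the $m$-sums exactly as above.
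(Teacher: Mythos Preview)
Your treatment of \eqref{Q est 1}--\eqref{Q est 3} matches the paper's proof essentially line for line: the same termwise use of Theorem \ref{hml}, the same telescoping decomposition \eqref{add sub}, and the same moment sums $\sum|\beta_m|m^3$, $\sum|\beta_m|m^4$, $\sum|\beta_m|m^5$. Your paragraph on \eqref{Q est smooth} is a bit tangled, but the conclusion is right: the outer $\A_{\ep m}$ contributes $C_\A(1+(\ep m)^{-1})$, and the sum $\sum|\beta_m|m^3(1+(\ep m)^{-1}) = \sum|\beta_m|m^3 + \ep^{-1}\sum|\beta_m|m^2$ is bounded by $C\ep^{-1}$ for $\ep\in(0,\ep_\Q)$.

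For \eqref{Q est 4} you take a genuinely different route from the paper. The paper sets $III_h=(1-\partial_x^2)^{-1}[V(\A_h-1)W]$, writes the energy identity $\|III_h\|_{H^1}^2=\int V(\A_h-1)W\cdot III_h$, substitutes $(\A_h-1)=-h^2\Theta_h\partial_x^2$, integrates by parts once to move a derivative off $W$, and closes with H\"older and Sobolev; the terms $I_h$ and $II_h$ are handled analogously. You instead use duality: $(1-\partial_x^2)^{-1}:H^{-1}\to H^1$, the multiplier bound $\|FG\|_{H^{-1}}\le C\|F\|_{H^1}\|G\|_{H^{-1}}$ (valid by duality from the algebra property of $H^1$), and then \eqref{A est 2} at negative index, $\|(\A_h-1)G\|_{H^{-1}}\le C h^2\|G''\|_{H^{-1}}=Ch^2\|G\|_{H^1}$. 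For the outer term you use $(\A_h-1)=-h^2\Theta_h\partial_x^2$ together with the boundedness of the multiplier $(1-\partial_x^2)^{-1}\partial_x^2$. Both arguments are correct; yours is slightly more streamlined and avoids the integration-by-parts bookkeeping, at the cost of invoking the $H^1\cdot H^{-1}\hookrightarrow H^{-1}$ product estimate, which you correctly justify.
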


\begin{proof}
For \eqref{Q est 1} we use \eqref{A est 1} and Sobolev embedding\footnote{By which we mean the famous estimate $\|f\|_{L^\infty} \le \|f\|_{H^s}$ and its best friend $\| f g \|_{H^s} \le C\|f\|_{H^s} \|g\|_{H^s}$, when $s \ge 1$.} to see that
$$\| \A_{\ep m} [(\A_{\ep m}V)  (\A_{\ep m} W)]\|_{H^s} \le C\|V\|_{H^s} \|W\|_{H^s}$$
where the constant is independent of both $m$ and $\ep$.  Therefore
$$
\|\Q_{\ep}(V,W)\|_{H^s} \le C \left(\sum_{m \ge 1} |\beta_m| m^3\right) \|V\|_{H^s} \|W\|_{H^s}.
$$
Assumption \ref{bass}
tells us that the sum converges  and so we get \eqref{Q est 1}. The estimate \eqref{Q est smooth} 
follows in a similar fashion, simply using \eqref{A est smooth} instead of \eqref{A est 1} in the first step on the outer $\A_{\ep m}$.

For \eqref{Q est 2} we have
\bes\begin{split}
\A_{\ep m} [(\A_{\ep m}V)  (\A_{\ep m} W)] - \A_{\ep' m} [(\A_{\ep' m}V)  (\A_{\ep' m} W)] 
&= \left(\A_{\ep m} -\A_{\ep'm}\right)[(\A_{\ep m}V)  (\A_{\ep m} W)]\\
&+ \A_{\ep' m}[((\A_{\ep m}-\A_{\ep'm})V)  (\A_{\ep m} W)]\\
&+ \A_{\ep' m}[(\A_{\ep'm}V) \left((\A_{\ep m} -\A_{\ep'm})W\right)].
\end{split}\ees
Estimates \eqref{A est 1} and \eqref{A est 4} together with Sobolev embedding give
$$
\|\A_{\ep m} [(\A_{\ep m}V)  (\A_{\ep m} W)] - \A_{\ep' m} [(\A_{\ep' m}V)  (\A_{\ep' m} W)] \|_{H^s} \le C m|\ep - \ep'|  
 \|V\|_{H^{s+1}} \|W\|_{H^{s+1}}.
$$
Thus
$$
\|\Q_\ep(V,W) - \Q_{\ep'}(V,W)\|_{H^s}  \le C|\ep-\ep'|\left(\sum_{m \ge 1} |\beta_m|  m^4\right)\|V\|_{H^{s+1}} \|W\|_{H^{s+1}}.
$$
As before, Assumption \ref{bass}
tells us that the sum converges and  \eqref{Q est 2} follows.

For  \eqref{Q est 3} we have
\be\label{add sub}\begin{split}
\A_{\ep m} [(\A_{\ep m}V)  (\A_{\ep m} W)] - VW 
&= \left(\A_{\ep m} -1\right)[(\A_{\ep m}V)  (\A_{\ep m} W)]\\
&+ [(\A_{\ep m}-1)V]  \A_{\ep m} W\\
&+ V \left(\A_{\ep m} -1\right) W.
\end{split}\ee
Estimates \eqref{A est 1}, \eqref{A est 2} and Sobolev embedding give
$$
\|\A_{\ep m} [(\A_{\ep m}V)  (\A_{\ep m} W)] - VW \|_{H^s} 
\le C \ep^2 m^2 \|V\|_{H^{s+2}} \|W\|_{H^{s+2}}.
$$
And so
$$
\|\Q_\ep(V,W) - \Q_0(V,W)\|_{H^s} \le C \ep^2 \left(\sum_{m \ge 1} |\beta_m| m^5 \right) \|V\|_{H^{s+2}} \|W\|_{H^{s+2}}.
$$
The sum converges and thus we have \eqref{Q est 3}.

Things are a bit trickier for \eqref{Q est 4}.  Let 
\begin{multline*}
I_h:=(1-\partial_x^2)^{-1}\left(\A_{h} -1\right)[(\A_{h}V)  (\A_{h} W)],\\
II_h:= (1-\partial_x^2)^{-1}[(\A_{h}-1)V]  \A_{h} W \mand 
III_h:=(1-\partial_x^2)^{-1}V \left(\A_{h} -1\right) W
\end{multline*}
so that
$$
(1-\partial_x^2)^{-1} \Q_\ep(V,W) = \sum_{m \ge 0} \beta_m m^3\left(I_{\ep m} + II_{\ep m} + III_{\ep m}\right).
$$

We estimate $III_h$ in $H^1$.
Integration by parts shows that 
$$
\int_\R (1-\partial_x^2)III_h(x) III_h(x) dx = \int_\R III_h^2(x) + (\partial_x III_h)^2(x) dx = \|III_h\|_{H^1}^2.
$$
And so, using the definition of $III_h$, we have:
$$
 \|III_h\|_{H^1}^2 = \int_\R III_h(x) V(x)\left(\A_{h} -1\right) W(x) dx.
$$
Recalling the fact that $(\A_h -1) = -h^2 \Theta_h \partial_x^2$ (see \eqref{Theta}) we have
$$
 \|III_h\|_{H^1}^2 = -h^2 \int_\R III_h(x) V(x)\Theta_h  W''(x) dx.
$$
Integrating by parts (and noting that $\Theta_h$ and the derivative commute) we get:
$$
 \|III_h\|_{H^1}^2 = h^2 \left(\int_\R III'_h(x) V(x)\Theta_h  W'(x) dx+ \int_\R III_h(x) V'(x)\Theta_h  W'(x) dx\right).
$$
From this we use various versions of H\"older's inequality to get:
$$
 \|III_h\|_{H^1}^2 \le  h^2 \left(\|III'_h\|_{L^2} \|V\|_{L^\infty} \|\Theta_h W'\|_{L^2} +
 \|III_h\|_{L^\infty} \|V'\|_{L^2} \|\Theta_h W'\|_{L^2}
 \right).
$$
Using \eqref{Theta est} and Sobolev's inequality and we get
$\ds
 \|III_h\|_{H^1}^2 \le  C h^2 \|III_h\|_{H^1} \|V\|_{H^1} \|W\|_{H^1}
$ or rather
$$
 \|III_h\|_{H^1} \le  C h^2  \|V\|_{H^1} \|W\|_{H^1}.
$$

We can use the same sort of reasoning to show that $
 \|I_h\|_{H^1} +  \|II_h\|_{H^1} \le  C h^2  \|V\|_{H^1} \|W\|_{H^1}.
$ as well. Thus we have
$$
\|(1-\partial_x^2)^{-1} \Q_\ep(V,W) \|_{H^1} \le C \ep^2 \left(\sum_{m \ge 1} |\beta_m| m^5\right) \|V\|_{H^1} \|W\|_{H^1}.
$$
The sum converges, \eqref{Q est 4} follows and we are done.

\end{proof}

\subsection{Estimates of $\P_\ep$}
Our estimates for  the ``cubic'' part $\P_\ep$ are:
\begin{proposition}
Assume Assumption \ref{phass} and \ref{bass}.  Then
there exists 
$C_\P>0$ so that for all $\kappa_* > 0$ there exists $\ep_\P>0$ so that 
so that the following hold when $\|W\|_{H^1},\|\tilde{W}\|_{H^1} \le \kappa_*$ and $\ep \in (0,\ep_\P)$:
\be\label{P est 1}
\|\P_\ep(W)\|_{H^1} \le C_\P \|W\|_{H^1}^3
\ee
and
\be\label{P est 2}
\|\P_\ep(W)-\P_\ep(\tilde{W})\|_{H^1} \le C_\P\left(\|W\|^2_{H^1} + \|\tilde{W}\|_{H^1}^2 \right) \|W-\tilde{W}\|_{H^1}.
\ee
Moreover, if $s = 1,2$,
\be\label{P est smooth}
\|\P_\ep(W)\|_{H^{s+1}} \le C_\P \ep^{-1} \|W\|_{H^s}^3.
\ee

\end{proposition}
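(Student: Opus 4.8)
The plan is to estimate the ``cubic'' operator $\P_\ep(W) = \ep^{-6} \sum_{m\ge1} m \A_{\ep m}[\Psi_m'(m\ep^2 \A_{\ep m} W)]$ term by term in $m$, using the pointwise cubic bounds on $\Psi_m'$ from \eqref{Psi est} together with the $L^2\to L^2$ boundedness of the averaging operators from Theorem \ref{hml}, and then summing against the coefficients $\gamma_m$ which are controlled by Assumption \ref{bass}. The one structural subtlety is the hypothesis $|\eta| \le m\delta_*$ needed to apply \eqref{Psi est}: here $\eta = m\ep^2 \A_{\ep m} W$, so by \eqref{A est 1} and Sobolev embedding $\|\A_{\ep m} W\|_{L^\infty} \le C\|W\|_{H^1} \le C\kappa_*$, hence $|\eta| \le C m \ep^2 \kappa_* \le m\delta_*$ provided $\ep$ is small enough depending on $\kappa_*$ (this is exactly why $\ep_\P$ is allowed to depend on $\kappa_*$ in the statement).

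For \eqref{P est 1}: fix $m$ and write $G_m := \A_{\ep m} W$. Since $\Psi_m'(0) = 0$, we have $\Psi_m'(m\ep^2 G_m(x)) = \int_0^1 \Psi_m''(t m\ep^2 G_m(x))\, m \ep^2 G_m(x)\, dt$, and by the second bound in \eqref{Psi est}, $|\Psi_m'(m\ep^2 G_m(x))| \le \gamma_m (m\ep^2)^3 |G_m(x)|^3$ pointwise (this is just the first inequality in \eqref{Psi est} applied directly, in fact). So I would estimate
\[
\|\A_{\ep m}[\Psi_m'(m\ep^2 G_m)]\|_{H^1} \le \|\Psi_m'(m\ep^2 G_m)\|_{H^1},
\]
using \eqref{A est 1}, and then bound the $H^1$ norm of the composition. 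The $L^2$ part is immediate from the pointwise cubic bound and $\|G_m^3\|_{L^2} \le \|G_m\|_{L^\infty}^2 \|G_m\|_{L^2} \le C\|W\|_{H^1}^3$. For the derivative, $\partial_x[\Psi_m'(m\ep^2 G_m)] = \Psi_m''(m\ep^2 G_m)\, m\ep^2 G_m'$, and the second bound in \eqref{Psi est} gives $|\Psi_m''(m\ep^2 G_m)| \le 3\gamma_m (m\ep^2)^2 |G_m|^2$, so $\|\partial_x[\cdots]\|_{L^2} \le 3\gamma_m (m\ep^2)^3 \|G_m^2 G_m'\|_{L^2} \le C\gamma_m (m\ep^2)^3 \|W\|_{H^1}^3$ by Sobolev embedding and \eqref{A est 1}. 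Collecting the powers, each term contributes $\le C\gamma_m m^3 \ep^6 \cdot \ep^{-6} m = C\gamma_m m^4 \|W\|_{H^1}^3$ after the prefactor $\ep^{-6} m$, and $\sum_m |\gamma_m| m^4 < \infty$ by Assumption \ref{bass} delivers \eqref{P est 1}.

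For \eqref{P est 2}: the difference $\Psi_m'(m\ep^2 \A_{\ep m} W) - \Psi_m'(m\ep^2 \A_{\ep m}\tilde W)$ is handled via the fundamental theorem of calculus, $= \int_0^1 \Psi_m''(m\ep^2(t\A_{\ep m}W + (1-t)\A_{\ep m}\tilde W))\, m\ep^2 \A_{\ep m}(W-\tilde W)\, dt$, noting that the convex combination also satisfies the $|\eta|\le m\delta_*$ constraint. Using $|\Psi_m''| \le 3\gamma_m(m\ep^2)^2(\max\text{ of arguments})^2$ gives the pointwise bound $\le C\gamma_m(m\ep^2)^3(|\A_{\ep m}W|^2 + |\A_{\ep m}\tilde W|^2)|\A_{\ep m}(W-\tilde W)|$; taking $L^2$ and $H^1$ norms as above — including differentiating, where one also needs the $\Psi_m'''$-type control, which is available since $\Psi_m'$ is $C^{2,1}$ so $\Psi_m''$ is Lipschitz with constant $\le 6\gamma_m$ — produces the quadratic-times-difference structure. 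The only mildly delicate point in the $H^1$ estimate of the difference is that differentiating the composition produces a term involving $(\Psi_m'' \text{ at } W) - (\Psi_m'' \text{ at } \tilde W)$ multiplied by a derivative, which is where the Lipschitz bound on $\Psi_m''$ (equivalently the $\gamma_m$ bound on $\Psi_m'''$) enters; everything then sums against $\sum |\gamma_m| m^4$ as before.

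For \eqref{P est smooth} with $s=1,2$: this is the analogue of \eqref{Q est smooth} — we spend one of the averaging operators to gain a derivative. Apply \eqref{A est smooth} to the outermost $\A_{\ep m}$: $\|\A_{\ep m}[\Psi_m'(\cdots)]\|_{H^{s+1}} \le C_\A(1 + (\ep m)^{-1})\|\Psi_m'(\cdots)\|_{H^s}$. Then bound $\|\Psi_m'(\cdots)\|_{H^s}$ for $s = 1, 2$ exactly as in the proof of \eqref{P est 1} but carrying up to two derivatives (for $s=2$ one needs $\Psi_m'$ to be $C^{3}$, which holds by Assumption \ref{phass}, and the bound $|\Psi_m'''| \le 6\gamma_m$ from the $C^{3,1}$ hypothesis; here the inner averaged functions must be estimated in $H^s$ rather than $H^1$, picking up extra factors of $m$ from the chain rule, but still controlled). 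The prefactor $(1+(\ep m)^{-1})$ combined with $\ep^{-6} m$ yields, after accounting for the $(m\ep^2)^3$ from the cubic bound, a net $\ep^{-1}$ and a power of $m$ that is summable against Assumption \ref{bass} (at worst $m^5$, covered by $\sum|\gamma_m|m^4$ together with the decay one keeps from $(\ep m)^{-1}$ absorbing one factor of $m$ — I would track the exponents carefully here).

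The main obstacle I anticipate is precisely this bookkeeping of powers of $m$ and $\ep$: the chain rule on $\Psi_m'(m\ep^2 \A_{\ep m}W)$ produces factors of $m\ep^2$ with each derivative and additional factors of $m$ from $H^s$-estimates of $\A_{\ep m}W$ via \eqref{A est smooth}, so one must check at each stage that the residual power of $m$ is no larger than $4$ (so that $\sum |\gamma_m| m^4 < \infty$ closes the sum) while the residual power of $\ep$ is non-negative for \eqref{P est 1}–\eqref{P est 2} and exactly $\ep^{-1}$ for \eqref{P est smooth}. The constraint-verification step ($|\eta| \le m\delta_*$ for small $\ep$) is routine but must be stated, as it is what forces the $\kappa_*$-dependence of $\ep_\P$.
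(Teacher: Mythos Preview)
Your proposal is correct and follows essentially the same approach as the paper: verify the constraint $|\eta|\le m\delta_*$ via Sobolev embedding and smallness of $\ep$, use the pointwise bounds \eqref{Psi est} together with the chain rule to estimate the $H^1$ norm of $\Psi_m'(m\ep^2\A_{\ep m}W)$ and its difference, sum against $\sum_m \gamma_m m^4$, and for \eqref{P est smooth} spend the outer $\A_{\ep m}$ via \eqref{A est smooth}. The paper even derives \eqref{P est 1} from \eqref{P est 2} by setting $\tilde W=0$, whereas you do \eqref{P est 1} directly --- both are fine.

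One small point on the bookkeeping you flag for $s=2$: the bound $|\Psi_m'''|\le 6\gamma_m$ is not quite enough. With only that, the term $\Psi_m'''(m\ep^2 G)(m\ep^2 G')^2$ in $\partial_x^2[\Psi_m'(m\ep^2 G)]$ contributes $\gamma_m m^2\ep^4$ before the prefactor, giving $\ep^{-3}$ rather than $\ep^{-1}$ after $\ep^{-6}m\cdot(\ep m)^{-1}$. The fix is that $\Psi_m'''(0)=0$ (since $\Phi_m'''(r_*m)=2\beta_m$), so the Lipschitz bound actually gives $|\Psi_m'''(\eta)|\le 6\gamma_m|\eta|$, recovering the missing factor of $m\ep^2|G|$ and closing the estimate at $\gamma_m m^3\ep^{-1}\|W\|_{H^2}^3$. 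The paper omits these details entirely, so your write-up is already more complete than theirs.
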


\begin{proof}
Estimate \eqref{P est 1} is a consequence of  \eqref{P est 2}, so we prove \eqref{P est 2}.

The first step is to notice if
$|a|,|b| \le m \delta_*$ (as in Assumption \ref{phass}) then 
the estimates in \eqref{Psi est} give
\be\label{diff of cube}
|\Psi_m'(a)-\Psi_m'(b)| \le 
{3 \over 2} {\gamma_m }\left(a^2 + b^2\right)|a-b|
\mand
|\Psi_m''(a)-\Psi_m''(b)| \le 
6 {\gamma_m }\left(|a| + |b|\right)|a-b|.
\ee

Now fix $\kappa_*>0$ and let $\ep_\P = \sqrt{\delta_*/\kappa_*}$.
Then 
$\|W\|_{H^1},\|\tilde{W}\|_{H^1} \le \kappa_*$ and $\ep \in (0,\ep_\P)$ imply (by way of \eqref{A est 1} and Sobolev embedding) that
$\|m \ep^2 \A_{\ep m} W\|_{L^\infty},\|, m \ep^2 \A_{\ep m} \tilde{W}\|_{L^\infty} \le m \delta_*$.  
So we can deploy the first estimate in \eqref{diff of cube} 
to get
\bes\begin{split}
&\left \vert 
\Psi'_m(m \ep^2 \A_{\ep m} W(x))
-\Psi'_m(m \ep^2 \A_{\ep m} \tilde{W}(x))
\right \vert\\ \le &{3\gamma_m m^3 \ep^6 \over 2} \left(| \A_{\ep m} W(x)|^2 + |\A_{\ep m} \tilde{W}(x)|^2\right)
\left|\A_{\ep m} W(x) - \A_{\ep m} \tilde{W}(x)\right|.
\end{split}\ees
This implies, using Sobolev embedding and estimates from Theorem \ref{hml}:
$$
\| \Psi'_m(m \ep^2 \A_{\ep m} W) - \Psi_m'(m \ep^2 \A_{\ep m}\tilde{W})\|_{L^2}
\le {3 \over 2} \gamma_m m^3 \ep^6 \left(\|W\|^2_{H^1} + \|\tilde{W}\|_{H^1}^2 \right) \| W- \tilde{W}\|_{L^2}.
$$

Similarly we have
\be\begin{split}
&\left \vert 
\partial_x \left(\Psi'_m(m \ep^2 \A_{\ep m} W(x))
-\Psi'_m(m \ep^2 \A_{\ep m} \tilde{W}(x))\right)
\right \vert\\
  =&m \ep^2 \left \vert 
\Psi''_m(m \ep^2 \A_{\ep m} W(x)) \A_{\ep m} W'(x)
-\Psi''_m(m \ep^2 \A_{\ep m} \tilde{W}(x)) \A_{\ep m} \tilde{W}'(x)
\right \vert\\
 \le & m \ep^2 \left \vert \left(\Psi''_m(m \ep^2 \A_{\ep m} W(x)) 
-\Psi''_m(m \ep^2 \A_{\ep m} \tilde{W}(x))\right) \A_{\ep m} {W}'(x)
\right \vert\\
&+ m \ep^2 \left \vert\Psi''_m(m \ep^2 \A_{\ep m} \tilde{W}(x))\A_{\ep m}
\left(W'(x)-\tilde{W}'(x)   \right)
\right \vert.
\end{split}\ee
Using this, \eqref{A est 2}, the second estimate in \eqref{diff of cube}
and Sobolev we get
$$
\left \|
\partial_x \left(\Psi'_m(m \ep^2 \A_{\ep m} W)
-\Psi'_m(m \ep^2 \A_{\ep m} \tilde{W})\right)
\right\|_{L^2} \le 9 \gamma_m m^3 \ep^6 \left(\|W\|^2_{H^1} + \|\tilde{W}\|_{H^1}^2 \right) \| W- \tilde{W}\|_{H^1}.
$$
Thus we have
$$
\|\P_\ep(W)-\P_\ep(\tilde{W})\|_{H^1} 
\le C \left(\sum_{m \ge 1} \gamma_m m^4\right) \left(\|W\|^2_{H^1} + \|\tilde{W}\|_{H^1}^2 \right) \| W- \tilde{W}\|_{H^1}.
$$
The sum converges because of Assumption \ref{bass} and we get the estimate from there.

To prove \eqref{P est smooth} is more of the same, simply using \eqref{A est smooth} 
to estimate the outermost instance of $\A_{\ep m}$ in the definition of $\P_\ep$.  We omit the details.

\end{proof}

\section{Solitary waves}

Let $\lambda(k)$ be Type I and take Assumptions \ref{phass} and \ref{bass} 
as given.
We prove the existence of solitary wave solutions, which is to say a nontrivial solution of \eqref{TWE1} in $H^1$.
We begin at \eqref{TWE2} and put
$$
W_\ep = W_0 + \ep^{\sigma} V_\ep.
$$
Routine computations show that $V_\ep$ solves
\be\label{TWESOL}\begin{split}
\B_\ep V_\ep - 2 \Q_\ep(W_0,V_\ep)  
&=R_\ep +\ep^\sigma \Q_\ep(V_\ep,V_\ep) + \ep^2 \N_\ep (V_\ep)
\end{split}
\ee
with 
$$
R_\ep := \ep^{-\sigma} \left[-\B_\ep W_0 + \Q_\ep(W_0,W_0) +\ep^2 \P_\ep(W_0) \right]
$$
and
$$
\N_\ep (V): =  \ep^{-\sigma}\left[\P_\ep ( W_0 + \ep^\sigma V) - \P_\ep(W_0) \right].
$$
From Proposition \ref{B ests} we know that $\B_\ep$ is invertible and so 
\eqref{TWESOL} is equivalent to
\be\label{TWES2}
\L_\ep V:= V_\ep - 2\B_{\ep}^{-1} \Q_\ep(W_0,V_\ep) = \B_\ep^{-1} R_\ep + \ep^\sigma \B_\ep^{-1} \Q_\ep(V_\ep,V_\ep) + \ep^2 \B_\ep^{-1} \N_\ep(V_\ep).
\ee

Now we claim that 
\be\label{claim}
\|\B_\ep^{-1} \Q_\ep(W_0,V) - \B_0^{-1} \Q_0(W_0,V)\|_{H^1}  \le C \ep^\sigma \|V\|_{H^1}.
\ee
Here is why. We have
$$
\B_\ep^{-1} \Q_\ep(W_0,V) - \B_0^{-1} \Q_0(W_0,V)
= \left(\B_\ep^{-1} - \B_0^{-1}\right) \Q_\ep(W_0,V) + \B_0^{-1} \left(\Q_\ep(W_0,V) - \Q_0(W_0,V) \right).
$$
For the first term we use \eqref{B est 3} and \eqref{Q est 1} to get
$$
\| \left(\B_\ep^{-1} - \B_0^{-1}\right) \Q_\ep(W_0,V)\|_{H^1} \le C_\B C_\Q \ep^\sigma \|W_0\|_{H^1} \|V\|_{H^1}.
$$
For the second we recall the definition of $\B_0$ and then use \eqref{Q est 4}
$$
\|\B_0^{-1} \left(\Q_\ep(W_0,V) - \Q_0(W_0,V) \right)\|_{H^1} \le {|\lambda''(0)| \over 2} C_\Q \ep^2 \|W_0\|_{H^1} \|V\|_{H^1}.
$$
Thus we have \eqref{claim}.

And so we see that 
$\L_\ep V$
is a small perturbation (in the norm topology of bounded operators from $H^1$ to $H^1$) of 
$$
\L_0 V:=V - 2 \B_0^{-1} \Q_0(W_0,V) = V + {4b \over \lambda''(0)}(1-\partial_x^2)^{-1} [W_0 V].
$$
$\L_0$  is invertible on $E^1$ (recall $E^1=H^1 \cap\left\{\text{even functions}\right\}$).
See, for instance, Proposition 4.1 in \cite{FP1}, Lemma 4 in \cite{SW} or Lemma 3.1 in \cite{HML}.
And thus a Neumann series argument implies $\L_\ep$ is also invertible on $E^1$
and there is constant $C_\L>0$ so that 
\be\label{Lep inv est}
\|\L_\ep^{-1} F\|_{E^1} \le C_\L \|F\|_{E^1}
\ee
and this holds for $\ep$ sufficiently close to zero.

If we impose the condition that $V_\ep$ is even, it is simple enough to conclude that everything on the right hand side of \eqref{TWES2} is even
and so we invert $\L_\ep$ to get
\be\label{TWES3}
V_\ep =\L_\ep^{-1} \B_\ep^{-1} R_\ep + \ep^\sigma\L_\ep^{-1} \B_\ep^{-1} \Q_\ep(V_\ep,V_\ep) + \ep^2\L_\ep^{-1} \B_\ep^{-1} \N_\ep(V_\ep)
=:\M_\ep[V_\ep].
\ee
We now show that $\M_\ep$ is a contraction on a ball in $E^1$, which in turns means we will have a fixed point and thus a solution of \eqref{TWES3} (and thus of our whole problem).

First we note that if we use \eqref{KdV} and the definition $R_\ep$ we have:
$$
R_\ep = -\ep^{-\sigma}
(\B_\ep - \B_0) W_0
+\ep^{-\sigma}\left(Q_\ep(W_0,W_0)-Q_0(W_0,W_0)\right) - \ep^{2-\sigma} \P_\ep(W_0).
$$
Then we use \eqref{B est 2} on the first term, \eqref{Q est 3} on the second and \eqref{P est 1} on the third to get
$$
\|R_\ep\|_{H^1} \le C_\B \|W_0\|_{H^{1 + \sigma}} + C_\Q \ep^{2-\sigma} \|W_0\|^2_{H^3} + C_\P \ep^{2-\sigma}\|W_0\|_{H^1}^3.
$$
Recalling that $\|W_0\|_{H^s} < \infty$ for all $s$ and then using \eqref{Lep inv est} and \eqref{B inv est} we conclude that there is are
 constants
$\kappa_1,\ep_1>0$ so that
$$
\|\L_\ep^{-1} \B_\ep^{-1} R_\ep\|_{H^1} \le \kappa_1/2
$$
when $\ep \in (0,\ep_1)$.

Likewise if we use \eqref{B inv est}, \eqref{Q est 1} and \eqref{Lep inv est}  we can find $\kappa_2, \ep_2>0$ so that
$$
\|\ep^\sigma\L_\ep^{-1} \B_\ep^{-1} \Q_\ep(V,V) \|_{H^1} \le \ep^\sigma \kappa_2 \|V\|_{H^1}^2 
$$
and 
\bes\begin{split}
\|\ep^\sigma\L_\ep^{-1} \B_\ep^{-1}\left( \Q_\ep(V,V) - \Q_\ep(\tilde{V},\tilde{V})\right) \|_{H^1} 
\le \ep^\sigma  \kappa_2\left(\|V\|_{H^1} + \|\tilde{V}\|_{H^1} \right)\|V-\tilde{V}\|_{H^1}
\end{split}\ees
when $\ep \in (0,\ep_2)$.

Now assume that
$\|V\|_{H^1}, \|\tilde{V}\|_{H^1} \le \kappa_1$.  Then we can use 
 \eqref{B inv est}, \eqref{Lep inv est}, the definition of $\N_\ep$ and \eqref{P est 2} to show 
 there exists $\kappa_3,\ep_3>0$ so that 
$$
\|\ep^2 \L_\ep^{-1} \B_\ep^{-1} \N_\ep(V)\|_{H^1} 
\le  \ep^{2}\kappa_3\|V\|_{H^1}
$$
and
$$
\|\ep^2 \L_\ep^{-1} \B_\ep^{-1} \left(\N_\ep(V)-\N_\ep(\tilde{V}) \right)\|_{H^1} 
\le  \ep^{2} \kappa_3\|V-\tilde{V}\|_{H^1}
$$
when $\ep \in (0,\ep_3)$.

All the preceding estimates tell us that if $0 < \ep < \ep_* := \min\{\ep_1,\ep_2,\ep_3\}$ 
and $\|V\|_{H^1}, \|\tilde{V}\|_{H^1} \le \kappa_1$
then
$$
\|\M_1 [V]\|_{H^1} \le {1 \over 2} \kappa_1 +  \ep^\sigma \kappa_2 \kappa^2_1 + \ep^2 \kappa_3 \kappa_1
$$
and
$$
\|\M_1 [V]-\M_1[\tilde{V}]\|_{H^1}  \le (2\ep^\sigma \kappa_2 \kappa_1 + \ep^2 \kappa_3) \|V-\tilde{V}\|_{H^1}.
$$

Thus we can take $\ep$ sufficient small so that $\M_1$ maps the ball of radius $\kappa_1$ in $E^1$ into itself and is a contraction there.
The fixed point of the map solves \eqref{TWE1} and thus results in a traveling wave solution of \eqref{motion}.
Moreover, the smoothing estimates  \eqref{Q est smooth} and \eqref{P est smooth} can be used in a bootstrap
argument to show that $V_\ep$ is in fact in $H^3$; the details are routine and we leave them out.
All together we have proven our main result:
\begin{theorem} \label{type I sw}
Suppose that $\lambda(k)$ is Type I and Assumptions \ref{phass} and \ref{bass} hold.
There exists $\ep_I>0$, and $\kappa_I>0$
so the following hold for $\ep \in (0, \ep_1)$.  There is a unique function $V_\ep \in E^3$ with $\|V_\ep\|_{E^1} \le \kappa_I$
so that
$$
c_\ep^2 = c_0^2 - {1 \over 2} \lambda''(0) \ep^2 \mand W(x) = W_0(x) + \ep^\sigma V_\ep(x)
$$
solve \eqref{TWE1}. 
\end{theorem}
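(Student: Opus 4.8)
The plan is to recast \eqref{TWE1} as a fixed-point problem for the correction term $V_\ep$ and to close it with the Banach contraction principle on the even subspace $E^1$, using the core estimates of Section 3 to sidestep the loss of derivatives. Almost all of the scaffolding has in fact already been assembled above; what follows is the order in which I would deploy it.

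First I would substitute $W_\ep = W_0 + \ep^\sigma V_\ep$ into \eqref{TWE2} and use that $W_0$ solves the profile equation \eqref{KdV} to cancel the leading-order terms, arriving at \eqref{TWESOL}: the linearized operator $\B_\ep - 2\Q_\ep(W_0,\cdot)$ on the left, and on the right a residual $R_\ep$ measuring how far $W_0$ is from solving the $\ep$-problem, the quadratic self-interaction $\ep^\sigma\Q_\ep(V_\ep,V_\ep)$, and the cubic remainder $\ep^2\N_\ep(V_\ep)$. Since Proposition \ref{B ests} supplies a uniformly bounded inverse $\B_\ep^{-1}$ on every $H^s$, I would apply it to get the equivalent equation \eqref{TWES2}, namely $\L_\ep V_\ep = \B_\ep^{-1}R_\ep + \ep^\sigma\B_\ep^{-1}\Q_\ep(V_\ep,V_\ep) + \ep^2\B_\ep^{-1}\N_\ep(V_\ep)$ with $\L_\ep := I - 2\B_\ep^{-1}\Q_\ep(W_0,\cdot)$.

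The decisive structural fact is that $\L_\ep$ is a norm-small perturbation, as a bounded operator on $H^1$, of $\L_0 = I + (4b/\lambda''(0))(1-\partial_x^2)^{-1}[W_0\,\cdot\,]$; this is the claim \eqref{claim}, proved by adding and subtracting $\B_0^{-1}\Q_\ep(W_0,V)$ and controlling one piece with \eqref{B est 3} and \eqref{Q est 1}, the other with the smoothing bound \eqref{Q est 4} and the explicit form of $\B_0$. Because $\L_0$ is invertible on $E^1$ — the single kernel/cokernel mode, coming from translation invariance of the $\sech^2$ profile, is the odd function $W_0'$ and so is absent on even functions (see \cite{FP1,SW,HML}) — a Neumann series upgrades this to invertibility of $\L_\ep$ on $E^1$ with the uniform bound \eqref{Lep inv est} for $\ep$ small. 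Restricting the ansatz to even $V_\ep$ keeps the entire right-hand side of \eqref{TWES2} even, so I can invert $\L_\ep$ and obtain the fixed-point map $\M_\ep$ of \eqref{TWES3}. The remaining bookkeeping is to verify that $\M_\ep$ maps a fixed ball $\{\,\|V\|_{E^1}\le\kappa_1\,\}$ into itself and contracts it for small $\ep$: the residual is estimated via \eqref{B est 2} (applied to $(\B_\ep-\B_0)W_0$, which is cheap because $W_0$ is smooth), \eqref{Q est 3}, and \eqref{P est 1}; the quadratic and cubic terms carry explicit prefactors $\ep^\sigma$ and $\ep^2$ and are controlled by \eqref{Q est 1} and \eqref{P est 2}; and the pieces are tied together by \eqref{B inv est} and \eqref{Lep inv est}. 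Banach's theorem then delivers a unique small even $V_\ep\in E^1$, and a final bootstrap that feeds this solution back through \eqref{TWES3} and exploits the smoothing estimates \eqref{Q est smooth} and \eqref{P est smooth} (which trade a factor $\ep^{-1}$ for a gained derivative) raises its regularity to $E^3$, as claimed.

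The main obstacle — and the real content behind the core estimates — is the singular-perturbation danger flagged in the remark following Proposition \ref{B ests}: the symbol of $\B_\ep$ behaves like $\ep^{-2}$ and involves derivatives of every order, so a naive iteration loses regularity at each step and closes in no fixed Sobolev space. The resolution, taken from \cite{FP1}, is that $\B_\ep^{-1}$ converges to $\B_0^{-1}$ \emph{uniformly at the level of symbols}, which is \eqref{B est 3}, and that the $\Q_\ep\to\Q_0$ error is smoothing once composed with $(1-\partial_x^2)^{-1}$ — an operator proportional to $\B_0^{-1}$ — which is \eqref{Q est 4}. With those two facts available, everything downstream is a disciplined accounting of powers of $\ep$, arranging that each perturbative term is genuinely small in the $E^1$ operator norm without ever demanding extra derivatives on $V_\ep$.
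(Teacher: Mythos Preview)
Your proposal is correct and follows essentially the same route as the paper: the same substitution $W_\ep=W_0+\ep^\sigma V_\ep$, the same reformulation \eqref{TWESOL}--\eqref{TWES3}, the same use of \eqref{B est 3} and \eqref{Q est 4} to establish the perturbation claim \eqref{claim}, the same appeal to invertibility of $\L_0$ on $E^1$ and Neumann series for $\L_\ep$, the same residual and contraction estimates via \eqref{B est 2}, \eqref{Q est 3}, \eqref{P est 1}, \eqref{Q est 1}, \eqref{P est 2}, and the same bootstrap to $E^3$ via \eqref{Q est smooth} and \eqref{P est smooth}. There is nothing to add.
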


\section{Examples of Type I lattices}

\subsection{Hermann/Mikikits-Leitner FR lattices}\label{mh section}
Theorem \ref{type I sw}  recaptures the bulk of the main result for the finite range problem as studied 
 in \cite{HML} (their Corollary 14).  In addition to an assumption on the smoothness of the potentials $\Phi_m$
 similar to our Assumption \ref{phass}, they have three major conditions:
 \begin{enumerate}[(a)]
 \item
 $\al_m, \beta_m$ and $\gamma_m$ are zero except
 at finitely many choices for $m$, \item the coefficients $\al_m$ are always non-negative and at least one is positive,
 \item $\sum_{m} \beta_m m^3 \ne 0$.
 \end{enumerate}

 Condition (a) immediately implies the convergences in Assumption \ref{bass} and (c) is exactly $b \ne 0$. The conditions also imply their lattice is Type I.  First of all,
condition (b) tells us that $\al_m \ge 0$ for all $m$ and we know immediately that $\lambda(k) \ge0$ for all $k$ and so 
we have condition (i).  Likewise, since $d^2/dk^2|_{k = 0} \sinc^2(k)<0$, condition (b) also  tells us that $\lambda''(0)<0$ and so we have (ii). 

Next, since $\sinc(k)$ is $C^{\infty}$ on $\R$,
condition (a) tells us that $\lambda(k)$ is $C^{\infty}$ on $\R$, as it is just a finite sum of $\sinc$s.  
The two estimates in (iii) are then just easy consequences of Taylor's theorem.  The second holds with
$\sigma = 2$.  A small note here is that $\lambda'(0) = \lambda'''(0) = 0$ because $\lambda(k)$ is even.

As for (iv), look at 
$
\lambda(0) - \lambda(k) = \sum_{m \ge 1} \al_m (1- \sinc^2(mk/2)).
$
If $k \ne 0$ then $1-\sinc^2(mk/2) > 0$ for all $m \ge 1$. Thus, since all the terms in the last sum are positive, the sum itself is positive for all $k$.
And we have $\lambda(0) > \lambda(k)$ if $k \ne 0$. And since $\lambda(k) \to 0$ as $|k| \to \infty$ and is continuous, we can conclude $\sup_{|k| \ge k_*} \lambda(k) < \lambda(0)$ for any non-zero $k_*$.

So the conditions they impose on their lattices easily fulfill all the assumptions needed in Theorem \ref{type I sw} and our result applies with $\sigma = 2$.
Their condition (b) does allow them to prove that their solitary wave is positive and unimodal. 

\subsection{NNN Lattices}
We now consider next nearest neighbor lattices as presented in \cite{W} or \cite{VZ}.  Those problems (after some elementary 
changes of variables and renaming constants) correspond to setting 
\bes\label{NNN}
\Phi'_1(r) = r + \beta_1 r^2 +\Psi_1'(r), \quad \Phi'_2(r) = g r  + \beta_2 r^2 + \Psi_2'(r) \mand \Phi'_m(r) = 0 \text{ when $m > 2$}.
\ees
The functions $\Psi'_1$ and $\Psi'_2$ are assumed smooth and enjoy the cubic type estimates in \eqref{Psi est}.  That is to say, their lattices pass Assumption \ref{phass} (with $r_* =0$). 
The constant  $g$ is what we call $\al_2$; $g$ is the name in both \cite{W} and \cite{VZ} and so we use it here for consistency.  The convergences in Assumption \ref{bass} are met because all the coefficients are zero after $m = 2$.
 We need $b \ne 0$ to hold, which here means that $\beta_1\ne -8 \beta_2$. In \cite{W}, they specify $\beta_1 \ne 0$ (which they call ``$a$'') and $\beta_2 =0$, so our condition encompasses theirs. In \cite{VZ} they require $0 < \beta_2 < \beta_1/2$ (their equation (7), where they use $\al$ where we use $\beta$).  Again,
 our condition encompasses theirs.

This lattice is Type I when $g>-1/16$.  Here is a quick explanation.  We have
$$
\lambda(k) = \sinc^2(k/2) + 4 g \sinc^2(k) \mand c_0^2 = 1+4g.
$$
This is clearly bounded below and we have (i).
Then we compute $\ds \lambda''(0) = -{1 \over 6} - {8 g \over 3}$ which tells us that  
$\lambda''(0)<0$ 
 when $g > - 1/16$.  As in the previous section, the smoothness of $\lambda(k)$ 
 and Taylor's theorem gives the estimates in (iii) with $\sigma = 2$.
 More differential Calculus can be used get condition (iv) for $g > -1/16$. 

 And so we can conclude the existence of solitary waves in NNN lattices as in \eqref{NNN} so long as $g > -1/16$
 and $\beta_1\ne -8 \beta_2$.  In particular we have the results of Theorem \ref{type I sw} with $\sigma = 2$.
   This is what is found in \cite{W}, though the calculations there are not fully rigorous.  In \cite{VZ} the authors study the case when $g \in (-1/4,-1/16)$.  It turns out that this case is an example of what we call Type II and that's a story for another time.

\subsection{Calogero-Moser} Now we show that certain generalized Calogero-Moser lattices (as studied in \cite{IP1,IP2}) meet the assumptions 
of Theorem \ref{type I sw} and thus establish the existence of solitary waves.  We recall that this lattice corresponds to putting $\Phi_m(r) =1 / r^a$
where $a > 1$ is a parameter.  Our particular interest is when $a > 3$.
In \eqref{TWA} we put, for simplicity, $r_* =1$.  

The first thing is to compute $\al_m$, $\beta_m$ and $\gamma_m$.
We have
$$
\al_m = \Phi''_m(m) = a(a+1)m^{-a-2} \mand 
\beta_m = \Phi'''_m(m)/2 = -{1 \over 2} {a(a+1)(a+2)} m^{-a-3}.
$$
It doesn't take too much effort to show that $\gamma_m =  a (a+1)(a+2)(a+3) m^{-a - 4} $ works in \eqref{Psi est}.
Which is to say we have Assumption \ref{phass} and the estimates in \eqref{Psi est}.  
Since $a > 3$  we see that the convergences in Assumption \ref{bass} are met as well.  And since $\beta_m < 0$ for all $m$ it follows that $b \ne 0$, as we want. 

So we need to confirm that the lattice is Type I.  First of all, since $\al_m >0$ for all $m$, we have $\lambda(k)\ge0$ and so (i) is easily confirmed.  
Indeed this also implies that $\lambda''(0) < 0$ so we have (ii).

Establishing (iii) is complicated as $\theta(k)$ is not a $C^\infty$ function. Indeed, its regularity depends
on $a$.  First note that $\theta(k) = \theta_a(k)$ where
$$
 \theta_a(k):=4 a(a+1) \sum_{m = 1}^\infty {1 \over m^{a+2}} \sin^2(mk/2)=
2a(a+1)\left( \sum_{m\ge1} {1 \over m^{a+2}} - \sum_{m \ge 1}{1 \over m^{a+2}} \cos(mk) \right).
$$
We claim that $\theta_a(k)$ is 
\begin{itemize}
\item $C^{4,a-3}$ on $\R$ when $a \in (3,4]$,
\item $C^{5,a-4}$ on $\R$ when $a \in (4,5)$,
\item $C^{5,1-\delta}$ on $\R$, for all $\delta >0$, when $a=5$,
\item $C^{5,1}$ on $\R$ when $a > 5$.
\end{itemize}
Most of this claim is a consequence of this following, 
which is Theorem 4.2 in \cite{N}:
\begin{theorem}\label{niss} Suppose that $f(k) = \sum_{m \in \Z} f_m e^{i mk}$ and, for some $r \in {\bf N}$ and $q \in(0,1)$, 
$\sup_{m \in \Z} |f_m| |m|^{r+q} < \infty$. Then $f \in C^{r-1,q}$ on $\R$.
\end{theorem}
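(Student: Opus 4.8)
The plan is to differentiate the Fourier series of $f$ term by term $r-1$ times and then estimate the modulus of continuity of the resulting series by hand, splitting it at the frequency scale dictated by the increment. Write $M:=\sup_{m\in\Z}|f_m|\,|m|^{r+q}<\infty$, so that $|f_m|\le M|m|^{-r-q}$ for $m\neq0$. For each integer $j$ with $0\le j\le r-1$, the formally differentiated series $\sum_{m\in\Z}(im)^j f_m e^{imk}$ has general term bounded in modulus by $M|m|^{j-r-q}\le M|m|^{-1-q}$, which is summable since $1+q>1$; hence every one of these series converges absolutely and uniformly on $\R$. The standard theorem on differentiating series (uniform convergence of the differentiated series together with convergence at a point, applied inductively in $j$) then gives that $f$ is $C^{r-1}$ on $\R$ with
\begin{equation*}
f^{(r-1)}(k)=\sum_{m\in\Z}(im)^{r-1}f_m e^{imk}=:g(k),\qquad g_m:=(im)^{r-1}f_m,\quad |g_m|\le M|m|^{-1-q}.
\end{equation*}

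It then remains to show that $g$ is uniformly $q$-H\"older on $\R$. Fix $k\in\R$ and $0<|h|\le1$, and use the elementary bound $|e^{imh}-1|\le\min\{2,|m|\,|h|\}$ to split the difference at $|m|\sim1/|h|$:
\begin{equation*}
|g(k+h)-g(k)|\le\sum_{m\neq0}|g_m|\,|e^{imh}-1|\le M|h|\sum_{0<|m|\le1/|h|}|m|^{-q}\;+\;2M\sum_{|m|>1/|h|}|m|^{-1-q}.
\end{equation*}
For the low-frequency block I invoke the partial-sum estimate $\sum_{1\le m\le N}m^{-q}\le C_q N^{1-q}$ (valid precisely because $q\in(0,1)$) with $N=\lfloor 1/|h|\rfloor$, which bounds that block by $C_q M|h|\cdot|h|^{-(1-q)}=C_q M|h|^q$. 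For the high-frequency block, comparison with $\int_{1/|h|}^\infty x^{-1-q}\,dx$ bounds it by $C_q M|h|^q$. Adding, $|g(k+h)-g(k)|\le C M|h|^q$ with $C$ depending only on $q$, uniformly in $k$. Hence $f^{(r-1)}=g$ is uniformly $q$-H\"older on $\R$, which is exactly $f\in C^{r-1,q}$ on $\R$.

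The argument is essentially routine (the statement is due to Niss \cite{N}); the one place that needs care is matching the cut-off $|m|\sim1/|h|$ with the partial-sum estimate $\sum_{m\le N}m^{-q}\lesssim N^{1-q}$, which is what turns the extra factor $|h|$ produced by $|e^{imh}-1|\le|m||h|$ on the low side, and the tail $\sum_{|m|>1/|h|}|m|^{-1-q}$ on the high side, into the sharp exponent $|h|^q$. Strict positivity of $q$ is used on both sides (for $q=0$ the low-frequency block would only yield $|h|\log(1/|h|)$), which is the reason the hypothesis restricts to $q\in(0,1)$; the endpoint values of $q$ arising in the application of this theorem to $\theta_a$ (e.g.\ $a=4$, where one wants Lipschitz regularity of $\theta_a^{(4)}$) must therefore be treated by a separate direct estimate.
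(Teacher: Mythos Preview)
Your argument is correct and is the classical frequency-splitting proof of this Fourier-coefficient criterion for H\"older regularity. Note, however, that the paper does not prove this theorem at all: it is simply quoted as Theorem~4.2 in \cite{N} and applied, so there is no ``paper's own proof'' to compare against. Your remark about the endpoint $q=1$ being excluded --- and hence the case $a=4$ in the application to $\theta_a$ requiring a separate direct computation --- aligns exactly with what the paper does (it writes out $\theta_4$ explicitly as a polynomial on $(-\pi,\pi]$ and checks $C^{4,1}$ by hand).

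One small cosmetic point: you restrict to $0<|h|\le1$ in the H\"older estimate. For $|h|>1$ the bound is immediate from the uniform boundedness of $g$ (since $\sum_m|g_m|<\infty$), so there is no genuine gap, but a one-line remark to that effect would make the uniformity claim airtight.
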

All the statements in the claim come directly from this, except the case when $a = 4$.  But if you are persistent in doing lots of integrals you can find that
$$
\theta_4(k)= {2 \over 9} \pi^4 k^2 - {5 \over 18} \pi^2 k^4+{1 \over 6} \pi |k| k^4 -{1 \over 36} k^6
$$
on $k \in (-\pi,\pi]$ and is the $2\pi$-periodic extension of the above elsewhere.  And that function is easily checked to be $C^{4,1}$ on $\R$.
Note also that if $a > 5$, Theorem \ref{niss} implies that $\theta_a(k)$ is smoother than $C^{5,1}$.  That extra regularity does not translate into any particularly interesting extra features of the solution and so we simply lump all cases with $a>5$ into the one class.


Next put
$$
\eta_a(k) = \theta_a(k) - {1 \over 2} \theta_a''(0)k^2 - {1 \over 24} \theta_a''''(0) k^4.
$$
So then
$$
\lambda(k) = \lambda_a(k):={\eta_a(k) \over k^2} +  {1 \over 2} \theta_a''(0) + {1 \over 24} \theta_a''''(0) k^2
$$
and
$$
\lambda_a''(k) = {\eta_a''(k) \over k^2} - {4\eta_a'(k) \over k^2} + {6 \eta_a(k) \over k^4} + {1 \over 12} \theta_a''''(0).
$$

If $a \in (3,4]$ then $\theta_a(k) \in C^{4,a-3}$ and 
\be\label{eta int}
\eta_a(k) = \int_0^k \int_0^{k_1} \int_0^{k_2} \int_0^{k_3} \theta_a''''(k_4) dk_4 d k_3 dk_2 dk_1.
\ee
Since we know $\theta_a''''(k)$ is in $C^{0,a-3}$ we have
$$
|\eta_a(k)| \le C \left \vert 
\int_0^k \int_0^{k_1} \int_0^{k_2} \int_0^{k_3} |k_4|^{a-3} dk_4 d k_3 dk_2 dk_1
\right \vert \le C|k|^{a+1}.
$$
Similarly one can show (by differentiating the formula for $\eta$) that
$$
|\eta'_a(k)| \le C |k|^{a} \mand |\eta_a''(k)| \le C |k|^{a-1}.
$$
So we conclude
that
\be\label{qH0}
|\lambda_a''(k)-{1 \over 12} \theta_a''''(0)| \le C |k|^{a-3}.
\ee
Note that we see from this calculation that $\lambda''(0) =\theta''''(0)/12$.
Furthermore, if $a \in (4,5)$ all of the above can be repeated with one more integral and one more derivative in 
\eqref{eta int} and we wind up with \eqref{qH0} unchanged.  Likewise, if $a > 5$ you wind up once again at \eqref{qH0}
but the right hand side is $Ck^2$. And for $a = 5$ you have $C |k|^{2-\delta}$ for any $\delta > 0$.

Next, for $a \in (3,5)$ if we use the FTOC again, followed by the previous estimate:
\be\begin{split}
\left| \lambda_a(k) - \lambda_a(0) - {1 \over 2} \lambda_a''(0) k^2
\right|
 &= \left \vert \int_0^k \int_0^y [\lambda_a''(z)-\lambda_a''(0)] dz dy \right \vert\\& \le C\left \vert \int_0^k \int_0^y |z|^{a-3} dz dy \right \vert\\& \le 
C|k|^{a-1}.
\end{split}
\ee
Thus the second estimate in (iii) holds
with $\sigma = a-3$, provided $a \in (3,5)$.   If $a > 5$ one has the same but with $\sigma = 2$.  And
for $a = 5$ you can take $\sigma = 2-\delta$, where $\delta >0$ is arbitrary.

Then note that by taking a small enough value of $k_*$, the first estimate in (iii) is a byproduct of the second estimate and (ii). So now we have all of (iii).  As for (iv), recall the $\al_m >0$ which, as we saw in Section \ref{mh section}, implies
$\lambda(k) < \lambda(0)$ for all $k$.  As in that section, the continuity of $\lambda(k)$ and its convergence to zero implies (iv).  
Thus we have all the hypotheses necessary to deploy Theorem \ref{type I sw}, with $\sigma$ taken as in the previous paragraph. That is we have:
\begin{cor} Let $\Phi_m(r)=\Phi(r) = 1/r^a$ and $r_* = 1$. For all $a > 3$ there exist $\ep_a>0$, $\kappa_a > 0$ 
and $\sigma_a>0$ so that the following hold for $\ep \in (0,1)$.  There is a unique function $V_\ep \in E^3$ with
$\|V_\ep\|_{H^1} \le \kappa_a$ so that 
$$
c_\ep^2 = a(a+1)\zeta(a) + {a(a+1) \over 12}\zeta(a-2) \ep^2 \mand W_\ep(x) = -{\zeta(a-2) \over 4 (a+2) \zeta(a)} 
\sech^2(x)+ \ep^{\sigma_a} V_\ep(x)
$$
 solve \eqref{TWE1}. If $a \ne 5$ then $\sigma_a = \min\{a-3,2 \}$.  If $a = 5$, then 
 $\sigma_a = 2-\delta$ for any $\delta > 0$.
\end{cor}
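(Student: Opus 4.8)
The plan is to verify the hypotheses of Theorem~\ref{type I sw}---Assumptions~\ref{phass} and~\ref{bass} together with the requirement that $\lambda(k)$ be Type~I---for the choice $\Phi_m(r)=\Phi(r)=1/r^a$ with $r_*=1$, and then to read off the stated constants by substituting the resulting $\al_m$, $\beta_m$ and $\gamma_m$ into $c_0^2=\sum_m\al_m m^2$, into~\eqref{WSA}, and into~\eqref{W0}.

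First I would differentiate $\Phi(r)=r^{-a}$ up to fourth order and evaluate at $r=r_*m=m$, producing $\al_m=a(a+1)m^{-a-2}$ and $\beta_m=-\tfrac12 a(a+1)(a+2)m^{-a-3}$. To obtain Assumption~\ref{phass} and the cubic bounds~\eqref{Psi est}, I would observe that $\Phi''''(r)=a(a+1)(a+2)(a+3)r^{-a-4}$ is positive and decreasing on $(0,\infty)$, so on the interval $|\eta|\le m\delta_*$---with $\delta_*<1$ fixed so that $m(1-\delta_*)>0$---its supremum occurs at the left endpoint; dividing by $6$ and absorbing the harmless factor $(1-\delta_*)^{-a-4}$ shows that $\gamma_m=C\,a(a+1)(a+2)(a+3)m^{-a-4}$ serves in~\eqref{Psi est} and bounds $\Lip\Phi_m'''/6$. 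Assumption~\ref{bass} is then an immediate computation with the zeta function: $\sum_m|\beta_m|m^5=\tfrac12 a(a+1)(a+2)\,\zeta(a-2)<\infty$ since $a>3$, likewise $\sum_m|\gamma_m|m^4=C\,a(a+1)(a+2)(a+3)\,\zeta(a)<\infty$, and $b=\sum_m\beta_m m^3=-\tfrac12 a(a+1)(a+2)\,\zeta(a)\ne0$.

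Next I would check the four Type~I conditions. Conditions (i), (ii) and (iv) are soft and go exactly as in the finite-range and NNN examples treated above: since $\al_m>0$ one has $0\le\lambda(k)\le\lambda(0)$, giving (i); $\lambda''(0)=\theta''''(0)/12=-\tfrac{a(a+1)}{6}\zeta(a-2)<0$, giving (ii); and $\lambda(0)-\lambda(k)=\sum_m\al_m m^2\bigl(1-\sinc^2(mk/2)\bigr)>0$ for $k\ne0$ while $\lambda$ is continuous with $\lambda(k)\to0$ as $|k|\to\infty$, forcing $\sup_{|k|\ge k_*}\lambda(k)<\lambda(0)$, giving (iv). The substantive point, and the step I expect to be the main obstacle, is condition (iii), because $\theta_a(k)$ is only finitely differentiable with regularity depending on $a$. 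My plan there is threefold: (1)~pin down the exact Hölder class of $\theta_a(k)=2a(a+1)\bigl(\zeta(a)-\sum_m m^{-a-2}\cos(mk)\bigr)$ from the decay of its Fourier coefficients via Theorem~\ref{niss}, obtaining $C^{4,a-3}$ for $a\in(3,4)$, $C^{5,a-4}$ for $a\in(4,5)$, and (at least) $C^{5,1}$ for $a\ge5$, with the borderline value $a=4$ handled separately by summing the cosine series in closed form---this yields an explicit polynomial in $k$ with a single $|k|k^4$ term and shows $\theta_4\in C^{4,1}$; (2)~subtract the degree-$4$ even Taylor polynomial of $\theta_a$ at $0$, call the remainder $\eta_a$, and express $\eta_a$ as a fourfold (for $a\in(3,5)$) or fivefold (for $a>5$) iterated integral of $\theta_a''''$ (respectively $\theta_a^{(5)}$), whose Hölder modulus is now known, which yields $|\eta_a(k)|\le C|k|^{a+1}$, $|\eta_a'(k)|\le C|k|^{a}$ and $|\eta_a''(k)|\le C|k|^{a-1}$ near $0$; (3)~divide by the appropriate powers of $k$ to recover $\lambda_a''$ from $\eta_a$, producing $|\lambda_a''(k)-\lambda_a''(0)|\le C|k|^{a-3}$ for $|k|$ small, and integrate twice to obtain the second bound of (iii) with $\sigma=a-3$ when $a\in(3,5)$, $\sigma=2$ when $a>5$, and $\sigma=2-\delta$ (any $\delta>0$) when $a=5$. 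The first bound of (iii) then follows from the second together with (ii) after shrinking $k_*$.

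With Assumptions~\ref{phass} and~\ref{bass} and the Type~I property established, Theorem~\ref{type I sw} applies and produces $\ep_a>0$, $\kappa_a>0$ and a unique even $V_\ep\in E^3$ with $\|V_\ep\|_{E^1}\le\kappa_a$ solving~\eqref{TWE1} in the stated form. The remaining bookkeeping is explicit: $c_0^2=\sum_m\al_m m^2=a(a+1)\,\zeta(a)$ and $-\tfrac12\lambda''(0)=\tfrac{a(a+1)}{12}\zeta(a-2)$, which by~\eqref{WSA} gives the claimed formula for $c_\ep^2$; and substituting $\lambda''(0)=-\tfrac{a(a+1)}{6}\zeta(a-2)$ and $b=-\tfrac12 a(a+1)(a+2)\zeta(a)$ into~\eqref{W0} produces the leading $\sech^2$ profile with amplitude $-\zeta(a-2)/\bigl(4(a+2)\zeta(a)\bigr)$. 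Finally one takes $\sigma_a$ to be the value of $\sigma$ identified in step~(3), namely $\min\{a-3,2\}$ for $a\ne5$ and $2-\delta$ for $a=5$, which completes the proof.
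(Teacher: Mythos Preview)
Your proposal is correct and follows essentially the same route as the paper: verify Assumptions~\ref{phass} and~\ref{bass} from the explicit derivatives of $r^{-a}$, check Type~I conditions (i), (ii), (iv) softly via $\al_m>0$, obtain (iii) by reading off the H\"older regularity of $\theta_a$ from its Fourier coefficients (Theorem~\ref{niss}), forming the remainder $\eta_a$, and estimating it by iterated integrals, then invoke Theorem~\ref{type I sw} and compute the explicit constants via the zeta function. The only slip is the claim that $\theta_a\in C^{5,1}$ for $a\ge5$: at $a=5$ Theorem~\ref{niss} gives only $C^{5,1-\delta}$, which is precisely why one ends up with $\sigma_a=2-\delta$ there---your final answer already reflects this, so the argument goes through once that line is corrected.
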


\begin{remark} In the above $\ds \zeta(a) := \sum_{m \ge 1} m^{-a}$ is the famous zeta-function.  
The profile $W_\ep(x)$ is smooth because $1/r^a$ is smooth.  Additionally, one can show that $W_\ep(x)$ is  negative
for all $x$.  The argument is exactly the same as the one which is used in \cite{IP2} to establish the positivity 
of traveling waves in the case $a \in (4/3,3)$ and so we leave it out.  Lastly, we note that the case $a =3$ remains open; formal estimates from \cite{IP1} indicate that there should be a KdV-like solitary wave solution.  But the method presented here is insufficient in its present form to establish this.

\end{remark}

{\bf Acknowledgements:} J.D.W would like to thank the NSF who funded this work under grant DMS-2006172 and also the Simons Foundation, under MPS-TSM-00007725.

\bibliographystyle{plain}
\bibliography{lr}{}

\begin{thebibliography}{10}

\bibitem{E}
Lawrence~C. Evans.
\newblock {\em Partial differential equations}, volume~19 of {\em Graduate
  Studies in Mathematics}.
\newblock American Mathematical Society, Providence, RI, 1998.

\bibitem{FP1}
G.~Friesecke and R.~L. Pego.
\newblock Solitary waves on {FPU} lattices. {I}. {Q}ualitative properties,
  renormalization and continuum limit.
\newblock {\em Nonlinearity}, 12(6):1601--1627, 1999.

\bibitem{FW}
Gero Friesecke and Jonathan A.~D. Wattis.
\newblock Existence theorem for solitary waves on lattices.
\newblock {\em Comm. Math. Phys.}, 161(2):391--418, 1994.

\bibitem{HML}
Michael Herrmann and Alice Mikikits-Leitner.
\newblock Kd{V} waves in atomic chains with nonlocal interactions.
\newblock {\em Discrete Contin. Dyn. Syst.}, 36(4):2047--2067, 2016.

\bibitem{IP2}
Benjamin Ingimarson and Robert~L. Pego.
\newblock Existence of solitary waves in particle lattices with power-law
  forces.
\newblock {\em Nonlinearity}, 37(12):Paper No. 125016, 28, 2024.

\bibitem{IP1}
Benjamin Ingimarson and Robert~L. Pego.
\newblock On long waves and solitons in particle lattices with forces of
  infinite range.
\newblock {\em SIAM J. Appl. Math.}, 84(3):808--830, 2024.

\bibitem{N}
Juhani Nissilä.
\newblock Fourier decay of absolutely and hölder continuous functions with
  infinitely or finitely many oscillations.
\newblock {\em arXiv preprint arXiv:1805.02445}, 2018.
\newblock Submitted 27 October 2021; originally announced May 2018.

\bibitem{SWsol}
D.~Smets and M.~Willem.
\newblock Solitary waves with prescribed speed on infinite lattices.
\newblock {\em J. Funct. Anal.}, 149(1):266--275, 1997.

\bibitem{SK2}
Atanas Stefanov and Panayotis Kevrekidis.
\newblock On the existence of solitary traveling waves for generalized
  {H}ertzian chains.
\newblock {\em J. Nonlinear Sci.}, 22(3):327--349, 2012.

\bibitem{SK1}
Atanas Stefanov and Panayotis Kevrekidis.
\newblock Traveling waves for monomer chains with precompression.
\newblock {\em Nonlinearity}, 26(2):539--564, 2013.

\bibitem{SW}
Atanas Stefanov and J.~Douglas Wright.
\newblock Small amplitude traveling waves in the full-dispersion {W}hitham
  equation.
\newblock {\em J. Dynam. Differential Equations}, 32(1):85--99, 2020.

\bibitem{T}
Morikazu Toda.
\newblock {\em Theory of nonlinear lattices}, volume~20 of {\em Springer Series
  in Solid-State Sciences}.
\newblock Springer-Verlag, Berlin, second edition, 1989.

\bibitem{V}
Anna Vainchtein.
\newblock Solitary waves in {FPU}-type lattices.
\newblock {\em Phys. D}, 434:Paper No. 133252, 22, 2022.

\bibitem{VT}
Anna Vainchtein and Lev Truskinovsky.
\newblock Solitary waves and kinks in {FPU} lattices with soft-hard-soft
  trilinear interactions.
\newblock {\em J. Nonlinear Sci.}, 34(6):Paper No. 113, 38, 2024.

\bibitem{VZ}
Christine~R. Venney and Johannes Zimmer.
\newblock Travelling lattice waves in a toy model of {L}ennard-{J}ones
  interaction.
\newblock {\em Quart. Appl. Math.}, 72(1):65--84, 2014.

\bibitem{W}
Jonathan A.~D. Wattis.
\newblock Approximations to solitary waves on lattices. {III}. {T}he monatomic
  lattice with second-neighbour interactions.
\newblock {\em J. Phys. A}, 29(24):8139--8157, 1996.

\end{thebibliography}

\end{document}